\newtheorem{theorem}{Theorem}[section]
\newtheorem{lemma}[theorem]{Lemma}
\newtheorem{proposition}[theorem]{Proposition}
\newtheorem{corollary}[theorem]{Corollary}
\newtheorem{introthm}{Theorem}
\theoremstyle{definition}
\newtheorem{definition}[theorem]{Definition}
\theoremstyle{remark}
\newtheorem*{remark*}{Remark}
\newtheorem{remark}[theorem]{Remark}
\numberwithin{equation}{section}
\newcommand{\K}{\mathbb{K}}
\newcommand{\Z}{\mathcal{Z}}
\newcommand{\N}{\mathbb{N}}
\newcommand{\C}{\mathrm{C}^*}
\newcommand{\lr}[1]{\langle#1\rangle}
\title[On tracial $\Z$-stability]{On tracial $\Z$-stability of simple non-unital $\C$-algebras}
\author[J.\ Castillejos]{Jorge Castillejos}
\address{\hskip-\parindent Jorge Castillejos, Unidad Cuernavaca del Instituto de Matematicas, UNAM, Av. Universidad s/n, 62210 Cuernavaca, Morelos, México}
\email{jorge.castillejos@im.unam.mx}
\author[K.\ Li]{Kang Li}
\address{\hskip-\parindent Kang Li, Department of Mathematics, 
		Friedrich-Alexander-Universität Erlangen-N\"urnberg (FAU),
		Cauerstra\ss e 11, 91058 Erlangen, Germany}
	\email{kang.li@fau.de}
\author[G.\ Szab\'o]{G\'abor Szab\'o}
\address{\hskip-\parindent G\'abor Szab\'o, Department of Mathematics, KU Leuven, Celestijnenlaan 200b, 3001 Leuven, Belgium}
\email{gabor.szabo@kuleuven.be}
\begin{document}

\maketitle

\begin{abstract}
	We investigate the notion of tracial $\Z$-stability beyond unital $\C$-algebras, and we prove that this notion is equivalent to $\Z$-stability in the class of separable simple nuclear $\C$-algebras. 
\end{abstract}

\section*{Introduction}

The Jiang-Su algebra $\Z$ has become a cornerstone in the classification programme of simple nuclear $\C$-algebras \cite{JS99}. 
Tensorial absorption of this algebra, reminiscent of the \emph{McDuff property} for $\mathrm{II}_1$ factors, is a regularity condition which forms part of the Toms--Winter regularity conjecture (\cite{TW08}) and
it allows separable, simple, unital and nuclear $\C$-algebras in the UCT class to be classified by their K-theoretical and tracial data (\cite{TWW17,GLN20a, GLN20b, El15, Wi10, Wi12, CETWW}).
Even outside the nuclear setting, tensorial absorption of $\Z$ is a useful condition. For instance, R\o rdam showed that this condition implies almost unperforation of the Cuntz semigroup \cite{Ro04}.

For the class of simple and unital $\C$-algebras, Hirshberg and Orovitz introduced  the notion of \emph{tracial $\Z$-stability} which corresponds to a weakened form of tensorial absorption of the Jiang-Su algebra $\Z$ \cite{HO}. 
The weaker nature of this notion makes it easier to verify than tensorial absorption of  $\Z$ in many examples from dynamical systems (see for instance \cite{MR4167017, MW20, MR4066584, KN21, Niu20, Na21}).
Despite its apparent weaker form, it turns out to be equivalent to tensorial absorption of $\Z$ in the presence of nuclearity. Therefore, all such examples where one can directly verify tracial $\Z$-stability are within the scope of the classification programme.

In recent years, there has been an uptick in interest concerning the classification programme beyond unital $\C$-algebras (\cite{GL20, GL20b, GL20c, EGLN20a, EGLN20, Na20, Li21}). So it is natural to consider and study tracial $\Z$-stability for general simple $C^*$-algebras, which may not be unital. 
This notion was announced by Amini--Golestani--Jamali--Phillips and very recently appeared in \cite{AGJP}.
This paper aims to study the relation between tracial $\Z$-stability and tensorial absorption of $\Z$ in this setting. 

In particular, we show that it is preserved under stable isomorphisms and that it implies almost unperforation of the Cuntz semigroup.
Our main theorem is the following.

\begin{introthm} 
	Let $A$ be a separable simple nuclear $\C$-algebra. Then $A$ is tracially $\Z$-stable if and only if $A$ is $\Z$-stable.
\end{introthm}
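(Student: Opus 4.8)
The plan is to establish the non-trivial implication --- tracial $\Z$-stability $\Rightarrow$ $\Z$-stability --- by following the strategy of Matui--Sato used by Hirshberg--Orovitz \cite{HO} in the unital case, relocated into Kirchberg's central sequence algebra $F(A)$ and adapted to the non-unital framework by means of the auxiliary results of this paper. The converse is comparatively soft: if $A\cong A\otimes\Z$ then, as $A$ is $\sigma$-unital, there is a unital $*$-homomorphism $\Z\to F(A)$, from which one reads off, for each $n$ and each $\eta>0$, a c.p.c.\ order zero map $M_n\to F(A)$ whose defect has dimension-function value below $\eta$ against every limit trace; lifting this through $\big(\ell^\infty(A)/c_0(A)\big)\cap A'\twoheadrightarrow F(A)$ via projectivity of the cone $C_0((0,1],M_n)$ produces approximately central c.p.c.\ order zero maps $M_n\to A$ with uniformly tracially small defect, which is exactly what Definition~\ref{non-unital tzs} asks for, the required Cuntz-domination by a prescribed positive element following from the strict comparison of $A$. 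I would only sketch this.

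For the main implication I would first reduce to the stably finite case. If $A$ is purely infinite then, being simple, separable and nuclear, $A\cong A\otimes\mathcal O_\infty$, and since $\Z$ embeds unitally in $\mathcal O_\infty$ we have $\mathcal O_\infty\cong\mathcal O_\infty\otimes\Z$ and hence $A\cong A\otimes\Z$ for free (tracial $\Z$-stability is not used here). Otherwise, by the almost unperforation of the Cuntz semigroup established earlier in the paper together with the dichotomy (due to R{\o}rdam) between stable finiteness and pure infiniteness for simple $\C$-algebras with almost unperforated Cuntz semigroup, $A$ is stably finite. Then $A$ admits a nonzero densely defined lower semicontinuous $2$-quasitrace, which is a trace by nuclearity, and almost unperforation gives strict comparison of positive elements in $A$ (and in $A\otimes\mathcal K$) through the pairing of Cuntz classes with suitably normalized traces. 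Invoking the theorem of Kirchberg--R{\o}rdam, in the form refined through R{\o}rdam--Winter's presentation of $\Z$ as an inductive limit of prime dimension drop algebras $Z_{n,n+1}$, it then suffices to produce, for every $n\in\N$, a c.p.c.\ order zero map $\psi\colon M_n\to F(A)$ together with an element $s\in F(A)$ such that $s^*s=1_{F(A)}-\psi(1_{M_n})$ and $\psi(e_{11})s=s$.

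The map $\psi$ comes from tracial $\Z$-stability: applying Definition~\ref{non-unital tzs} along an increasing exhaustion $F_1\subseteq F_2\subseteq\cdots$ of $A$, with tolerances $\varepsilon_k\to0$, fixed $n$, and positive contractions $a_k\in A$ chosen so that $\sup_\tau d_\tau(a_k)\to0$ over the (normalized, compact) trace simplex --- possible since $A$ is simple and stably finite --- gives approximately central c.p.c.\ order zero maps $\varphi_k\colon M_n\to A$ whose defects are Cuntz-dominated by $a_k$; these assemble to a c.p.c.\ order zero map $\psi\colon M_n\to F(A)$ for which $1_{F(A)}-\psi(1)$ is \emph{uniformly tracially small}, i.e.\ $d_{\bar\tau}(1_{F(A)}-\psi(1))$ can be made arbitrarily small uniformly over the limit traces $\bar\tau$ of $F(A)$. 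As $\psi$ is order zero, $\bar\tau\circ\psi$ is a multiple of the tracial state on $M_n$, so $d_{\bar\tau}(\psi(e_{11}))\ge\bar\tau(\psi(e_{11}))=\tfrac1n\bar\tau(\psi(1))\ge\tfrac1{2n}$ for every $\bar\tau$; thus $\psi(e_{11})$ is \emph{uniformly full}. It remains to produce the partial isometry $s\in F(A)$ with $s^*s=1_{F(A)}-\psi(1)$ and $\psi(e_{11})s=s$, and this is precisely a non-unital incarnation of Matui--Sato's property (SI): given central sequences $(e_k),(f_k)$ of positive contractions in $\big(\ell^\infty(A)/c_0(A)\big)\cap A'$ with $(e_k)$ uniformly tracially small and $\inf_\tau d_\tau(f_k)$ bounded away from $0$, one must find a central sequence $(s_k)$ with $s_k^*s_k=e_k$ and $f_ks_k=s_k$ in the limit.

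I expect the proof of this non-unital property (SI) for separable, simple, nuclear, stably finite $\C$-algebras with strict comparison to be the principal obstacle; everything else is assembly. I would obtain it by adapting the Matui--Sato argument --- which uses the completely positive approximation property to approximate the relevant central sequences tracially by finite-dimensional matrix pieces and then a combinatorial (marriage-type) matching to synthesize $s$ out of strict comparison --- replacing the unit of $A$ by a strictly positive element or an approximate unit throughout and carrying out the trace bookkeeping over the cone of densely defined traces rather than over a trace simplex; here one freely uses that tracial $\Z$-stability, hence strict comparison, passes to stabilizations and hereditary subalgebras (proved earlier in the paper), so the combinatorics may be run in whichever corner is convenient. (Alternatively, the order zero maps furnished by tracial $\Z$-stability equip $F(A)$ with uniform property $\Gamma$, after which one could appeal to a non-unital version of ``uniform property $\Gamma$ plus strict comparison implies $\Z$-stability'' as in \cite{CETWW}; this merely displaces the same technical difficulty.) Granting property (SI), one obtains $\psi$ and $s$, hence a unital $*$-homomorphism $Z_{n,n+1}\to F(A)$ for every $n$; a standard reindexing argument upgrades this family to a single unital $*$-homomorphism $\Z\to F(A)$, which by Kirchberg--R{\o}rdam yields $A\cong A\otimes\Z$.
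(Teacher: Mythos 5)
Your route for the substantive direction coincides with the paper's: reduce via the finite/infinite dichotomy (the purely infinite case by Kirchberg--Phillips), obtain strict comparison from the almost unperforation of Section 3 together with quasitraces being traces, use tracial $\Z$-stability to build order zero maps $M_n(\mathbb C)\to F_\omega(A)$ with tracially negligible defect, and conclude via property (SI) and unital embeddings of dimension drop algebras. The step you single out as ``the principal obstacle''---a non-unital property (SI) for separable simple nuclear $\C$-algebras with strict comparison---does not need to be re-derived: it is precisely \cite[Corollary 3.10]{Szabo19}, formulated for $F_\omega(A)$ with the notions of tracially null and tracially supported-at-$1$ elements, and Proposition \ref{prop:Matui-Sato} carries out exactly the Matui--Sato assembly you describe, finishing with \cite[Proposition 5.1(iii)]{RW10} and \cite[Proposition 5.1]{Na13}. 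As written, your proposal leaves its deepest step as a promissory note; citing this result closes it.

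There is, however, a concrete gap in your construction of the order zero maps: you choose positive contractions $a_k$ with $\sup_\tau d_\tau(a_k)\to 0$ ``over the (normalized, compact) trace simplex'', but for a non-unital simple stably finite $A$ no such simplex exists in general---all densely defined traces may be unbounded, $T(A)$ may be non-compact or empty, and the relevant tracial data on $F_\omega(A)$ are the generalised limit traces paired against elements of $\mathrm{Ped}(A)$. The paper repairs exactly this point: by \cite{Ti14}, $B=\overline{bAb}$ for $0\neq b\in\mathrm{Ped}(A)$ is algebraically simple, so its traces are bounded; Lemma \ref{lem:small.c} produces norm-one elements $c_k\in B$ with $d_\tau(c_k)\le 1/k$ for all $\tau\in T(B)$; Lemma \ref{lem:generalised.vs.limit} shows generalised limit traces restrict to multiples of limit traces on $B_\omega$, so $c=(c_k)$ is tracially null; and Proposition \ref{prop:tZ.ultrapower} then converts tracial $\Z$-stability into a map $M_n(\mathbb C)\to F_\omega(A)$ whose defect is dominated by $c$, i.e.\ a tracially large map (Proposition \ref{pro:tZ.tlarge.maps}). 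Your parenthetical ``trace bookkeeping over the cone of densely defined traces'' gestures at this, but the hereditary-subalgebra reduction is the actual mechanism and must be made explicit. Finally, for the easy converse the paper is softer than your sketch: $\Z$ is tracially $\Z$-stable \cite[Proposition 2.2]{HO} and tracial $\Z$-stability passes to minimal tensor products (Proposition \ref{prop:stabilisation}), so no lifting argument and no strict comparison (with its attendant normalization issues in the non-unital and traceless cases) is needed there.
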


It is a rather straightforward consequence of the definition that tracial $\Z$-stability yields the existence of tracially large order zero maps from matrix algebras into the central sequence algebra of $A$; a condition that we will refer to as the \emph{uniform McDuff} property. 
Not unlike in Hirshberg--Orovitz's original approach, the most significant step towards the main theorem is to show that the Cuntz semigroup of any tracially $\Z$-stable $\C$-algebra is almost unperforated.
In the presence of nuclearity this grants us access to property (SI), which allows us to make the jump to genuine $\Z$-stability with Matui--Sato's famous technique \cite{MS12} (see Section \ref{main.section}).

We hope that this version of tracial $\Z$-stability will have potential applications in the study of $\Z$-stability for algebras arising from group actions on general $\C$-algebras (see for instance \cite{AGJPII}).

\subsection*{Structure of the paper} In Section 1 we gather some preliminaries needed for this note.
In particular, we introduce a non-unital version of the uniform McDuff property and state its relation with uniform property $\Gamma$. Our version of tracial $\Z$-stability is introduced in Section 2, where we also show that it is preserved under stable isomorphisms. In Section 3, we prove that tracial $\Z$-stability yields almost unperforation of the Cuntz semigroup. Finally, the equivalence between $\Z$-stability and tracial $\Z$-stability for nuclear algebras is established in Section 4.

\subsection*{Acknowledgements}

KL and GS were supported by the Internal KU Leuven BOF project C14/19/088.
GS was further supported by the KU Leuven starting grant STG/18/019 and the project G085020N funded by the Research Foundation Flanders (FWO).
We also thank the anonymous referee for useful suggestions on an earlier version of this paper.

\begin{remark*}
The notion of tracial $\Z$-stability for non-unital $\C$-algebras is also introduced and discussed in \cite{AGJP}, which was posted online some time after this note was uploaded to the arXiv in 2021.
Some of the related results (overlapping with Sections 2 and 3) were announced by some of the authors of \cite{AGJP} at a preliminary stage at the 2018 ICM satellite conference in Rio de Janeiro (attended by the second author of this note) and COSy 2021 (attended by the first and second author of this note).
Although we claim no originality for some of the results in Sections 2 and 3, we had to ensure a self-contained treatment of our findings at the time of posting this note by supplying our own proofs (which are not necessarily identical to the ones in \cite{AGJP}) to all claims made there.
Shortly after this note was posted, the preprint \cite{FLL} appeared on arXiv where the authors developed similar ideas with the goal of finding conditions that yield stable rank one.
\end{remark*}

\section{Preliminaries}

\subsection{Notation}
	Let $A$ be a $\C$-algebra.
	We will denote the unitisation of $A$ by $\tilde{A}$, and the set of positive elements in $A$ by $A_+$. The Pedersen ideal of $A$ will be denoted by $\mathrm{Ped}(A)$. 
	We will write $\K$ for the algebra of compact operators on a separable infinite-dimensional Hilbert space and we will denote its standard matrix units by $(e_{ij})$.
	We will frequently identify $M_n(\mathbb{C})$ with the subalgebra generated by $e_{ij}$ with $1\leq i,j \leq n$. 
	
	The cone of lower semicontinuous (extended) traces on $A$ (see \cite[Section 3]{ERS11}) will be denoted by $T^+(A)$, and the set of tracial states on $A$ will be denoted by $T(A)$.
	We say $A$ is {\em traceless} when $T^+(A)$ only consists of traces with values in $\{0,\infty\}$.
	If $A$ is in addition simple, then there are precisely two (trivial) such traces denoted by the symbols $0$ and $\infty$, and the negation of $A$ being traceless is symbolically denoted as $T^+(A)\neq\{0,\infty\}$.
	Every lower semicontinuous trace on $\tau$ on $A$ extends uniquely to the lower semicontinuous trace $\tau \otimes \mathrm{Tr}$ on $A \otimes \K$ where $\mathrm{Tr}$ is the canonical unnormalised trace on $\K$ (\cite[Remark 2.27 (viii)]{BK04}). In fact, by uniqueness of $\mathrm{Tr}$ (see for instance \cite[Page 8885]{Ror19}), every lower semicontinuous trace on $A \otimes \K$ is of this form.
	\footnote{Let $\sigma$ be a non-trivial lower semicontinuous trace on $A \otimes \K$.	
	For any $a \in A_+$, the map $\sigma_a: \K_+ \to [0, \infty]$ given by $ \sigma_a(x) = \sigma(a\otimes x)$ is a well-defined lower semicontinuous trace on $\K$. 
	Similarly, $\tau: A_+ \to [0, \infty]$ given by $\tau(a) = \sigma(a \otimes e_{11})$ is a well-defined lower semicontinuous trace on $A$. Using the convention that $\infty \cdot 0 = 0$,
	the uniqueness of $\mathrm{Tr}$ yields 
	$$\sigma(a \otimes x) = \sigma_a(x) = \sigma_a(e_{11})\mathrm{Tr}(x) = \sigma(a \otimes e_{11}) \mathrm{Tr}(x) = \tau(a) \mathrm{Tr}(x).$$
	}
	We will use this fact freely.
	We will also simply say trace instead of lower semicontinuous trace.
		
	Given a $\C$-algebra $B$ and a map $\varphi: A \to B$. If $\mathfrak F \subseteq B$ is a subset, we write $\|[\varphi, \mathfrak{F}]\|< \epsilon$ to mean that $\|[\varphi(a),b]\|< \epsilon$ for any contractions $a \in A$ and $b \in \mathfrak{F}$.
	We will use $a \approx_\epsilon b$ to denote $\|a-b\|<\epsilon$. If $a\in A_+$ and $\epsilon>0$, then $(a-\epsilon)_+$ denotes the positive part of the self-adjoint element $a-\epsilon 1_{\tilde{A}}$. 
	
	As usual, given a free ultrafilter $\omega$ on $\N$, we denote the \emph{$\C$-ultrapower} of $A$ by $A_\omega:=\ell^\infty(A)/\{(a_n)_{n \in \mathbb{N} }|\lim_{n\to \omega}\|a_n\|=0\}$.
	For any subalgebra $D\subset A_\omega$, we denote $A_\omega \cap D'=\{x\in A_\omega \mid [x,D]=\{0\} \}$ and $A_\omega \cap D^\perp = \{x \in A_\omega \mid xD = Dx = \{0\}\}$.
	The \emph{(corrected) central sequence algebra} (see \cite[Definitions 1.1]{Ki06}) $F_\omega(A)$ of a $\sigma$-unital $\C$-algebra $A$ is given by the quotient
	\begin{align}
		F_\omega(A):= (A_\omega \cap A') / (A_\omega \cap A^\perp).
	\end{align}
	
\subsection{Functional calculus of order zero maps}\label{section:orderzero}

A c.p.\ map $\varphi: A \to B$ between $\C$-algebras has \emph{order zero} if it preserves orthogonality, i.e.\ $\varphi(a)\varphi(b)=0$ if $a,b\in A_+$ satisfy $ab=0$. By the structure theorem proved by Winter and Zacharias \cite[Theorem~3.3]{WZ09}, $\varphi: A \to B$ has order zero if and only if there are $h\in  \mathcal M(\C(\varphi(A)))_+$ and a $^*$-homomorphism $\pi: A \to \mathcal M(\C(\varphi(A)))\cap \{h\}'$ such that $\varphi(a) = h \pi(a)$ for $a\in A$. If $A$ is unital, $h$ is equal to $\varphi(1)$.
Using this decomposition, for any positive function $f \in C_0(0,1]$, one can define a new c.p.\ order zero map $f(\varphi): A \to B$ by $f(\varphi)(a)=f(h)\pi(a)$ for $a\in A$ (see \cite[Corollary~4.2]{WZ09}).

	\subsection{Cuntz equivalence and strict comparison}
	
	Given $a,b \in A_+$, it is said that $a$ is \emph{Cuntz-below} $b$, denoted $a \precsim_A b$, if for any $\epsilon>0$ there is $x\in A$ such that $x^*bx \approx_\epsilon a$. We will often simply write $\precsim$ if the relevant $\C$-algebra is understood from the context. It is said that $a$ is \emph{Cuntz-equivalent} to $b$, denoted by $a\sim b$,  if $a\precsim b$ and $b \precsim a$.
	The \emph{Cuntz semigroup} is given by $\mathrm{Cu}(A) = (A \otimes \K)_+ / \sim$ equipped with orthogonal addition%\footnote{To be precise, one picks two isometries $s_1,s_2$ in the multiplier algebra of $A\otimes\mathbb K$ with $s_1s_1^*+s_2s_2^*=1$, and defines the orthogonal sum of $a_1, a_2\in A_+$ to be the element $s_1a_1s_1^*+s_2a_2s_2^*$. The resulting binary relation on Cuntz equivalence classes is well-defined and independent of the choice of these isometries.} 
	and order given by Cuntz-subequivalence. The equivalence class of $a\in (A \otimes \K)_+$ in $\mathrm{Cu}(A)$ is denoted by $\langle a \rangle$. We refer to reader to \cite[Section 2]{APT11} for a more comprehensive review of the basic construction of the Cuntz semigroup.

	The Cuntz semigroup $\mathrm{Cu}(A)$ is called \emph{almost unperforated} if $\langle a \rangle \leq \langle b \rangle$ holds whenever $\langle a \rangle, \langle b \rangle \in \mathrm{Cu}(A)$ satisfy $(k+1)\langle a \rangle  \leq k \langle b \rangle$ for some $k\in \N$.	
	For simplicity, we will simply say quasitrace instead of lower semicontinuous $2$-quasitrace (see \cite[Definition 2.22]{BK04}).
	Any quasitrace on $A$ extends uniquely to the quasitrace $\overline{\tau}:=\tau \otimes \mathrm{Tr}$ on $A\otimes \K$ by \cite[Remark 2.27]{BK04}.
	The dimension function $d_\tau: (A\otimes \K)_+ \to [0,\infty]$ associated to $\tau$ is given by $d_\tau (a) = \lim_{n \to \infty} \overline{\tau}(a^{1/n})$. This dimension function induces a well-defined functional on $\mathrm{Cu}(A)$. 

	It is said that $A$ has \emph{strict comparison} if $\langle a \rangle \leq \langle b\rangle$ holds whenever $d_\tau(\langle a \rangle )< d_\tau(\langle b \rangle )$ for every quasitrace $\tau$ on $A$
	with $d_\tau (\langle b \rangle)>0$.
	If $A$ is simple  then  $\mathrm{Cu}(A)$ is almost unperforated if and only if $A$ has strict comparison (see \cite[Proposition 5.2.20]{APT} or \cite[Remark 9.2 (3)]{Thi20}).  
	
	We finish this subsection with the following definition. 
	
	\begin{definition}[{\cite[Definition 5.3.1]{APT}}]\label{def:soft}
	Let $x,y \in \mathrm{Cu}(A)$. It is said that $x$ is \emph{way-below} $y$, denoted $x \ll y$, if whenever $( y_n )_{n\in \mathbb{N}}$ is an increasing sequence with $y \leq  \sup_{n \in \mathbb{N}} y_n$, then
	there is some $n \in \mathbb{N}$ such that $x \leq y_n$. An element $y \in \mathrm{Cu}(A)$ is \emph{soft} if for any $x \in \mathrm{Cu}(B)$ such that $x \ll y$ there is some $k:=k(x) \in  \mathbb{N}$ such that $(k+1) x \leq ky$.
	\end{definition}
%	Let $\langle a \rangle, \langle b \rangle \in \mathrm{Cu}(A)$. It is said that $\langle a \rangle$ is way-below $\langle b \rangle$, denoted $\langle a \rangle \ll \langle b \rangle$, if whenever $(\langle y_n \rangle)_{n\in \mathbb{N}}$ is an increasing sequence with $\langle b \rangle \leq  \sup_{n \in \mathbb{N}} \langle y_n \rangle $, then
%	there is some $
%	n$ such that $\langle a \rangle \leq \langle y_n \rangle$.

\subsection{Generalised limit traces}
	
	A trace $\tau$ on $A_\omega$ is called a \emph{limit trace} if there is a sequence of tracial states $(\tau_n)_{n \in \mathbb{N} }$ on $A$ such that $\tau((a_n)_{n\in \mathbb{N} }) = \lim_{n \to \omega} \tau_n(a_n)$.
	Such tracial states on $A_\omega$ capture enough about the global tracial information when $A$ is unital, but become too restrictive when $A$ has unbounded traces. 
	In the general non-unital case, it becomes more appropriate to study the following class of traces.

\begin{definition}[{\cite[Definition 2.1]{Szabo19}}]
	Let $\omega$ be a free ultrafilter on $\mathbb{N}$. For a sequence of lower semicontinuous traces $(\tau_n)$ on $A$, one defines a lower semicontinuous trace $\tau: \ell^\infty (A)_+ \to [0, \infty]$ by
	\begin{equation}
	\tau((a_n)_{n \in \mathbb{N} }) = \sup_{\epsilon>0} \lim_{n \to \omega} \tau_n ((a_n - \epsilon)_+), \qquad (a_n)_{n \in \mathbb{N} } \in \ell^\infty(A)_+.
	\end{equation}
	This trace induces a lower  semicontinuous trace on $A_\omega$. Traces of this form on $A_\omega$ are called \emph{generalised limit traces}.
	The set of all generalised limit traces on $A_\omega$ will be denoted by $T_\omega^+(A)$. 
	
\end{definition}

Given a generalised limit trace $\tau$ on $A_\omega$ and $a\in A_+$, the map $\tau_a: (A_\omega \cap A')_+ \to [0, \infty], x \mapsto \tau(ax)$ yields a trace that satisfies $\tau_a (x) \leq \tau(a)\|x\|$ for $x \in (A_\omega \cap A')_+$.
It actually extends to a positive tracial functional on $A_\omega \cap A'$ with norm $\|\tau_a\|=\tau(a)$ if $\tau(a)<\infty$.
Moreover, this induces a trace on $F_\omega(A)$, which we will also denote by $\tau_a$.
We refer the reader to \cite[Remark~2.3]{Szabo19} for details.

\begin{lemma}[{cf.\ \cite[Proposition 2.3]{CEII}}]\label{lem:generalised.vs.limit}
	Let $A$ be an algebraically simple $\sigma$-unital $\C$-algebra with $T^+(A) \neq \{0,\infty\}$. 
	If a generalised limit trace on $A_\omega$ is finite on some non-zero positive element of $A$, then it is a constant multiple of a limit trace.
\end{lemma}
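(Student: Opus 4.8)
The plan is to show that, under the hypotheses, the sequence $(\tau_n)$ defining the given generalised limit trace $\tau$ becomes uniformly bounded after discarding an $\omega$-small set of indices; a normalisation then exhibits $\tau$ as a multiple of a limit trace. We may assume $\tau\neq 0$, the remaining case being trivial. Pick $a_0\in A_+$ with $\|a_0\|\leq 1$, $a_0\neq 0$ and $\tau(a_0)<\infty$; by the very definition of $\tau$ we have $\lim_{n\to\omega}\tau_n\big((a_0-\epsilon)_+\big)\leq\tau(a_0)$ for every $\epsilon>0$. Fix $\epsilon_0>0$ small enough that $b:=(a_0-\epsilon_0)_+$ is non-zero.

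The heart of the argument is to bound $\|\tau_n\|:=\sup\{\tau_n(x)\mid x\in A_+,\ \|x\|\leq 1\}$ along $\omega$. A functional calculus estimate shows that, since $\|a_0\|\leq 1$, one has $b^{1/m}\leq\tfrac{2}{\epsilon_0}(a_0-\tfrac{\epsilon_0}{2})_+$ for every $m\in\N$; hence $d_{\tau_n}(b)=\sup_m\tau_n(b^{1/m})\leq\tfrac{2}{\epsilon_0}\tau_n\big((a_0-\tfrac{\epsilon_0}{2})_+\big)$ and therefore $\lim_{n\to\omega}d_{\tau_n}(b)\leq\tfrac{2}{\epsilon_0}\tau(a_0)<\infty$. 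Now algebraic simplicity enters: if $h\in A_+$ is strictly positive then $x\precsim h$ for every $x\in A_+$, and because $A=\mathrm{Ped}(A)$ and $b\neq 0$ in the simple algebra $A$, a standard property of algebraically simple $\C$-algebras provides a fixed $N\in\N$ with $h\precsim b^{\oplus N}$; consequently $x\precsim b^{\oplus N}$, and so $d_{\tau_n}(x)\leq N\,d_{\tau_n}(b)$ for all $x\in A_+$. For $\|x\|\leq 1$ we moreover have $x\leq x^{1/m}$, hence $\tau_n(x)\leq d_{\tau_n}(x)\leq N\,d_{\tau_n}(b)$; taking the supremum over such $x$ yields $\|\tau_n\|\leq N\,d_{\tau_n}(b)$ and thus $\lim_{n\to\omega}\|\tau_n\|\leq\tfrac{2N}{\epsilon_0}\tau(a_0)<\infty$. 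In particular there is $U\in\omega$ on which the $\tau_n$ are uniformly bounded, hence extend to bounded traces on $A$.

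Since $\tau\neq 0$ we may shrink $U$ so that $\tau_n\neq 0$ for $n\in U$; set $c_n:=\|\tau_n\|\in(0,\infty)$ and $\sigma_n:=c_n^{-1}\tau_n$ for $n\in U$, and let $\sigma_n$ be a fixed tracial state on $A$ for $n\notin U$. Then $(\sigma_n)$ is a sequence of tracial states and $c:=\lim_{n\to\omega}c_n$ exists in $[0,\infty)$. For $(x_n)\in\ell^\infty(A)_+$ and $\epsilon>0$, the product rule for ultralimits of bounded sequences gives $\lim_{n\to\omega}\tau_n\big((x_n-\epsilon)_+\big)=c\cdot\lim_{n\to\omega}\sigma_n\big((x_n-\epsilon)_+\big)$, and since each $\sigma_n$ is a state the estimate $0\leq x_n-(x_n-\epsilon)_+\leq\epsilon$ yields $\sup_{\epsilon>0}\lim_{n\to\omega}\sigma_n\big((x_n-\epsilon)_+\big)=\lim_{n\to\omega}\sigma_n(x_n)$. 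Combining these, $\tau\big((x_n)\big)=c\cdot\lim_{n\to\omega}\sigma_n(x_n)$, i.e.\ $\tau=c\cdot\tau'$ where $\tau'$ is the limit trace associated to $(\sigma_n)$. If $c=0$ then $\tau=0$, which we have excluded; hence $c>0$ and $\tau$ is a positive multiple of $\tau'$, as claimed.

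I expect the only genuine obstacle to be the uniform boundedness of $(\tau_n)$ along $\omega$ obtained in the second paragraph: a priori these are arbitrary lower semicontinuous traces and could well be unbounded, and it is precisely the interplay of algebraic simplicity — which lets a single non-zero positive element Cuntz-dominate the whole algebra up to finitely many copies — with the finiteness of $\tau$ at $a_0$ that forces the bound. Once this is in place the normalisation is routine, amounting to the fact that for a uniformly bounded sequence of tracial states the associated generalised limit trace and limit trace coincide.
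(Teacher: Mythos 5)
Your argument is correct, and it reaches the conclusion by the same overall scheme as the paper (bound the norms $\|\tau_n\|$ along $\omega$ using finiteness of $\tau$ at one element, then normalise to tracial states and identify $\tau$ with $c$ times the resulting limit trace), but the way you obtain the crucial bound is genuinely different in its inputs. The paper simply declares that the proof of \cite[Proposition 2.3]{CE} goes through, substituting \cite[Proposition 2.5]{Ti14} to get $\inf_{\sigma\in T(A)}\sigma(a)>0$ for some non-zero $a\in A_+$; that uniform positivity, applied to the normalised traces $\sigma_n$, is what bounds $\lim_\omega\|\tau_n\|$ there. You instead prove the bound directly: since $A=\mathrm{Ped}(A)$ and $A$ is simple, a strictly positive element $h$ (this is where $\sigma$-unitality enters) lies in the algebraic ideal generated by $b=(a_0-\epsilon_0)_+$, giving $h\precsim b^{\oplus N}$, whence $\|\tau_n\|\leq d_{\tau_n}(h)\leq N\,d_{\tau_n}(b)\leq \frac{2N}{\epsilon_0}\tau_n\big((a_0-\tfrac{\epsilon_0}{2})_+\big)$, which is controlled by $\tau(a_0)$ along $\omega$. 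Note that specialising this inequality to tracial states yields $\inf_{\sigma\in T(A)}\sigma\big((a_0-\tfrac{\epsilon_0}{2})_+\big)\geq\tfrac{\epsilon_0}{2N}$, so your argument in effect reproves the instance of \cite[Proposition 2.5]{Ti14} that the paper imports; what you gain is a self-contained, quantitative proof relying only on standard Cuntz-comparison facts (monotonicity and additivity of $d_{\tau_n}$ on $\mathrm{Cu}(A)$ via the canonical extension of $\tau_n$ to $A\otimes\K$), at the cost of being longer than the paper's two-line reduction. The remaining steps (boundedness of the relevant $\tau_n$ on an $\omega$-large set, the normalisation, and the identification $\sup_{\epsilon>0}\lim_\omega\sigma_n((x_n-\epsilon)_+)=\lim_\omega\sigma_n(x_n)$ for states) are all correctly handled, including the degenerate cases $\tau=0$ and $c=0$.
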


\begin{proof}
	Since $A$ is algebraically simple, all non-trivial lower semicontinuous traces on $A$ are bounded and extend to positive tracial functionals.
	The same proof of \cite[Proposition 2.3]{CE} remains valid except we replace compactness of $T(A)$ with \cite[Proposition 2.5]{Ti14} in order to obtain $\inf_{\sigma \in T(A)} \sigma(a) > 0$ for some non-zero $a \in A_+$. 
\end{proof}

\subsection{Uniform McDuff property for non-unital $\C$-algebras}

We now present a non-unital version of the uniform McDuff property, which was defined in \cite[Definition 4.2]{CETW} for $\C$-algebras with compact tracial state space. 
As explained in \cite[Remark 4.3]{CETW}, the uniform McDuff property is equivalent to the existence of a c.p.c.\ order zero map $\varphi: M_n(\mathbb{C}) \to A_\omega \cap A'$ such that $\tau ( \varphi(1_{M_n})) =1$ for any limit trace $\tau$ on $A_\omega$; such maps have been called (uniformly) \emph{tracially large}. We also state here a version of such maps in the general non-unital case.

\begin{definition}\label{def:traciallylargemap}
	Let $A$ and $B$ be $\C$-algebras with $B$ unital and $A$ separable simple and $T^+(A)\neq \{0,\infty\}$. 
	A c.p.c.\ order zero map $\varphi: B \to F_\omega(A)$ is \emph{tracially large} if $\tau_a (\varphi(1)) = \tau(a)$ for all (or any) non-zero positive element $a \in \mathrm{Ped}(A)$ and $\tau \in T^+_\omega(A)$ with $\tau(a) < \infty$.\footnote{In other words, the element $1-\varphi(1)$ vanishes under the trace $\tau_a$. The fact that ``for all'' and ``for any'' agree here is due to \cite[Proposition 2.4]{Szabo19} applied to the inclusion $A\subset A_\omega$.}
	
	We say that a separable, simple $\C$-algebra $A$ with $T^+(A)\neq\{0,\infty\}$ is \emph{uniformly McDuff} if for every $n\in \mathbb{N}$ there is a tracially large order zero map $\varphi: M_n(\mathbb{C}) \to F_\omega(A)$.
\end{definition}

As in \cite[Proposition 2.6.(ii)]{CEII}, the notion introduced here agrees with Definition 4.2 of \cite{CETW} if every $\tau \in T^+(A)\setminus\{\infty\}$ is bounded and the tracial state space is compact and non-empty.
One may hence view the above as some kind of stabilised version of the previously defined notion of uniform McDuffness.

Since stably isomorphic $\C$-algebras have isomorphic central sequence algebras, it is reasonable to expect that the uniform McDuff property is also preserved. In order to prove this, one has to check how the stable isomorphism is transforming the traces of the form $\tau_a$ with $a \in \mathrm{Ped}(A)$ and $\tau \in T_\omega^+(A)$.

\begin{proposition} \label{Mcduff stable iso}
	The uniform McDuff property is preserved under stable isomorphism.
\end{proposition}

\begin{proof}
By \cite[Theorem 1.4]{Ped66}, any isomorphism between two $\C$-algebras restricts to an isomorphism between their Pedersen ideals.
If we combine this with \cite[Proposition 2.8]{CEII}, it follows directly that the uniform McDuff property is preserved under genuine isomorphism.
Therefore, given a separable simple $\C$-algebra $A$ with $T^+(A)\neq \{0,\infty\}$, it is enough to check that $A$ is uniformly McDuff if and only if $A \otimes \K$ is uniformly McDuff.
	
Consider $m\in \N$ and suppose $\varphi: M_m(\mathbb{C}) \to F_\omega (A)$ is any completely positive contractive order zero map, which by \cite[Proposition 1.2.4]{Wi09} can be represented by a sequence of order zero maps $\varphi_n: M_m(\mathbb{C})\to A$.
Let $\Psi: A_\omega \to (A\otimes \K)_\omega$ be given by $(a_n)_{n \in \mathbb{N}} \mapsto (a_n \otimes (\sum_{i=1}^n e_{ii}) )_{n \in \mathbb{N}}$.
By \cite[Lemma 1.3]{CEII} (which goes back to \cite{Ki06}), $\Psi$ induces an isomorphism $\bar{\Psi}: F_\omega(A) \to F_\omega (A \otimes \K)$.	
Consider a generalised limit trace $\sigma \in T^+_\omega(A\otimes \mathbb{K})$.
This trace is induced by a sequence $(\tau_n \otimes \mathrm{Tr})_{n \in \mathbb{N}}$ where $\tau_n \in T^+(A)$.
Let $\tau$ be the generalised limit trace on $A_\omega$ induced by the sequence $(\tau_n)_{n \in \mathbb{N}}$.
Suppose $a\in\mathrm{Ped}(A)\setminus\{0\}$ is positive.
Then we observe
\[
\begin{array}{cl}
\multicolumn{2}{l}{ \sigma_{a\otimes e_{11}}(\bar{\Psi}(\varphi(1))) }\\
=& \displaystyle \sup_{\varepsilon>0} \lim_{n\to\omega} (\tau_n\otimes\mathrm{Tr})\Big( (a^{1/2}\otimes e_{11})\cdot \varphi_n(1)\otimes\big(\sum_{j\leq n} e_{jj}\big)\cdot (a^{1/2}\otimes e_{11}) -\epsilon\Big)_+ \\
=& \displaystyle \sup_{\varepsilon>0} \lim_{n\to\omega} \tau_n\big( (a^{1/2}\varphi_n(1)a^{1/2} - \epsilon)_+ \big)  \ = \ \tau_a(\varphi(1)).
\end{array}
\]
Now $\varphi$ is tracially large if and only if $\tau_a(\phi(1))=\tau(a)$ whenever $0<\tau(a)<\infty$, and $\bar{\Psi}\circ\varphi$ is tracially large if and only if $\sigma_{a\otimes e_{11}}(\bar{\Psi}(\varphi(1)))=\sigma(a\otimes e_{11})$ whenever $0<\sigma(a\otimes e_{11})<\infty$.
Since by construction we always have $\tau(a)=\sigma(a\otimes e_{11})$ for the aforementioned assignment $\sigma\mapsto\tau$, the computation above directly implies that $\varphi$ is tracially large if and only if $\bar{\Psi}\circ\varphi$ is tracially large.
Consequently, $A$ is uniformly McDuff if and only if $A \otimes \K$ is.
\end{proof}

\begin{remark}	 \label{rem:Z-is-McDuff}
If $A$ is a separable simple exact $\Z$-stable $\C$-algebra with $T^+(A)\neq\{0,\infty\}$, then it is uniformly McDuff.
Indeed, let us first assume that every non-trivial trace on $A$ is a multiple of a tracial state and that $T(A)$ is compact.
Let $n \in \mathbb{N}$.
By \cite[Proposition 2.3]{CETWW}, there is an order zero map $\varphi: M_n(\mathbb{C}) \to F_\omega(A)= A_\omega \cap A'$ such that $\tau (\varphi(1)) = 1$ for all $\tau \in T_\omega (A)$.
It follows that $A$ is uniformly McDuff. 
We hence observe that the general case follows with the previous proposition and \cite[Theorem 2.7]{CE}. 
\end{remark}

In light of \cite[Theorem 4.6]{CETW}, it is natural to ask about the relation between the uniform McDuff property and stabilised uniform property $\Gamma$ (see \cite[Definition~2.5]{CEII}).
We can at least provide an answer to this question if we assume a regularity condition at the level of the Cuntz semigroup.
(It is conceivable that, although our proof uses these assumptions, the statement may actually hold in greater generality.)

\begin{proposition}
	Let $A$ be a simple separable $\C$-algebra with $T^+(A)\neq \{0,\infty\}$.
	If $A$ is uniformly McDuff, then $A$ has stabilised uniform property $\Gamma$. 
	The converse holds if $A$ is nuclear and has either stable rank one or $\mathrm{Cu}(A \otimes \Z) \cong \mathrm{Cu}(A)$.
\end{proposition}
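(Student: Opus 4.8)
The plan is to prove the two directions separately, treating the forward implication as essentially formal and concentrating the work on the converse.

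For the forward direction, suppose $A$ is uniformly McDuff. Fix $n \in \N$ and take a tracially large c.p.c.\ order zero map $\varphi \colon M_n(\mathbb{C}) \to F_\omega(A)$. The idea is that such a map automatically witnesses stabilised uniform property $\Gamma$ once we pass to the trace-kernel quotient or work directly with the pairing $\tau \mapsto \tau_a$. Concretely, set $p_j = \varphi(e_{jj})$ for $j = 1, \dots, n$; these are positive contractions in $F_\omega(A)$ which pairwise commute, are pairwise orthogonal (by the order zero property applied to orthogonal matrix units), and are all unitarily equivalent inside $F_\omega(A)$ via the partial isometries coming from the order zero structure theorem (or, more carefully, one first replaces $\varphi$ by $f(\varphi)$ for a suitable function and uses a standard reindexing/approximation argument to produce genuine projections and genuine unitaries). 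Tracial largeness gives $\sum_j \tau_a(p_j) = \tau_a \circ \varphi(1_{M_n}) = \tau(a)$ for every relevant $\tau$ and every non-zero $a \in \mathrm{Ped}(A)$, hence $\tau_a(p_j) = \tfrac1n \tau(a)$ for each $j$. Since the traces $\tau_a$ for $\tau \in T^+_\omega(A)$ finite on $a$ are exactly the ones appearing in the definition of stabilised uniform property $\Gamma$, this is precisely the required $n$-fold decomposition of the unit of $F_\omega(A)$ into projections that are trace-equal with respect to all limit-type traces; as $n$ was arbitrary, $A$ has stabilised uniform property $\Gamma$. One should cite \cite[Theorem 4.6]{CETW} and \cite[Proposition 2.6]{CEII} for the bounded-trace compact case as a template, and note the argument only uses the orthogonality, commutation, and trace data, all of which survive in the non-unital/unbounded setting via the machinery of $T^+_\omega(A)$ recalled above.

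For the converse, assume $A$ is nuclear with $T(A) \neq \emptyset$, has stabilised uniform property $\Gamma$, and additionally has stable rank one or $\mathrm{Cu}(A \otimes \Z) \cong \mathrm{Cu}(A)$. The strategy is to reduce to the unital, compact-trace-space situation where the equivalence of uniform property $\Gamma$ and the uniform McDuff property is already known \cite[Theorem 4.6]{CETW}, and then transfer the conclusion back along stable isomorphism using Remark \ref{Mcduff stable iso}. First, the hypotheses are all stable-isomorphism invariant — stabilised uniform property $\Gamma$ is invariant by design, nuclearity and $T(A) \neq \emptyset$ are invariant, stable rank one is preserved under stabilisation (every stably isomorphic algebra to a stable-rank-one algebra has stable rank one for simple algebras, or one argues via $A \otimes \K$), and $\mathrm{Cu}(A \otimes \Z) \cong \mathrm{Cu}(A)$ is Morita invariant — so we may replace $A$ by any algebra in its stable isomorphism class. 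Using the regularity hypothesis (stable rank one, or the Cuntz-semigroup condition), we can produce a full projection in $A \otimes \K$ whose corner $B$ is a unital, simple, separable, nuclear $\C$-algebra stably isomorphic to $A$ and with compact, non-empty tracial state space: here the role of the Cuntz-semigroup/stable-rank hypothesis is exactly to guarantee that cutting down by a projection with the right trace values is possible and that the resulting unital corner has the tracial state space behaving well (this is where one invokes the standard facts that for simple $\sigma$-unital $\C$-algebras with these regularity properties, $A \otimes \K$ has an abundance of projections with prescribed rank). Stabilised uniform property $\Gamma$ for $A$ descends to uniform property $\Gamma$ for the unital corner $B$ (again via the known equivalence in \cite[Proposition 2.6]{CEII} / the identification of the corrected central sequence algebras and their traces under passing to a full corner). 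Then \cite[Theorem 4.6]{CETW} applies to $B$: a separable, simple, unital, nuclear $\C$-algebra with compact non-empty tracial state space has uniform property $\Gamma$ if and only if it is uniformly McDuff. Hence $B$ is uniformly McDuff, and by Remark \ref{Mcduff stable iso} so is $A$.

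The main obstacle is the corner-reduction step in the converse: one must verify that the regularity hypothesis (stable rank one, or $\mathrm{Cu}(A \otimes \Z) \cong \mathrm{Cu}(A)$) genuinely provides a full projection in $A \otimes \K$ with a nice unital corner, and that all the relevant structure — the corrected central sequence algebra $F_\omega$, its cone of traces $T^+_\omega$, tracial largeness, and uniform property $\Gamma$ — transports correctly between $A$ and that corner. In particular one has to check that the traces on $F_\omega(B)$ relevant to uniform property $\Gamma$ correspond exactly to the generalised limit traces on $A_\omega$ that are finite on Pedersen-ideal elements, so that property $\Gamma$ for $A$ (stabilised) and for $B$ (unital) really are the same statement after passing to the corner; this is routine given \cite{Szabo19, CEII} but is the technical heart of the argument. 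The role of nuclearity is twofold: it is needed for the equivalence cited from \cite{CETW}, and it ensures the Cuntz semigroup / comparison theory behaves well enough that the alternative hypothesis $\mathrm{Cu}(A \otimes \Z) \cong \mathrm{Cu}(A)$ can substitute for stable rank one in producing the required projection.
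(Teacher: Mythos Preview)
Your forward direction is essentially the paper's argument, though you overcomplicate it: there is no need to produce genuine projections or invoke unitary equivalence. The paper simply observes that $\tau_a\circ\varphi$ is a tracial functional on $M_n(\mathbb C)$, hence equals $\tau_a(\varphi(1))\cdot\mathrm{tr} = \tau(a)\cdot\mathrm{tr}$, so $\tau_a(\varphi(e_{ii}))=\tfrac1n\tau(a)$ immediately. The positive contractions $f_i=\varphi(e_{ii})$ already witness stabilised uniform property $\Gamma$; no reindexing or functional calculus is required.

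Your converse, however, has a genuine gap. You propose to produce a full projection in $A\otimes\mathbb K$ and pass to its unital corner, but this is impossible when $A$ is stably projectionless --- by definition there are no non-zero projections anywhere in the stable class. Neither stable rank one nor the condition $\mathrm{Cu}(A\otimes\Z)\cong\mathrm{Cu}(A)$ rules this out: the Razak--Jacelon algebra, for instance, is simple, separable, nuclear, $\Z$-stable, has stable rank one, and is stably projectionless. So your corner-reduction step simply fails on a large class of algebras covered by the statement.

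The paper avoids this by invoking \cite[Proposition~2.4]{CE}, which gives a dichotomy: $A$ is either stably isomorphic to a unital algebra, or $A$ is stably projectionless. In the latter case one uses \cite[Lemma~3.3]{CE} (and an observation from \cite{CEII}) to find, within the stable isomorphism class, a \emph{non-unital} algebra $B$ with compact non-empty tracial state space and no unbounded traces. The point is that \cite[Theorem~4.6]{CETW} does not require unitality --- it applies to any non-elementary simple separable nuclear $\C$-algebra with compact non-empty $T(B)$ and only bounded traces --- so one can still run the transfer argument through $B$. Your overall strategy (reduce to the compact-trace-space setting, apply \cite[Theorem~4.6]{CETW}, transfer back via stable-isomorphism invariance) is exactly right; what is missing is the realisation that the intermediate algebra $B$ need not, and in general cannot, be taken unital.
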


\begin{proof}
	Suppose $A$ is uniformly McDuff.
	Then for any $n\in \N$ there is a tracially large order zero map $\varphi: M_n(\mathbb{C}) \to F_\omega(A)$.
	It follows that the map $x \mapsto \tau_a(\varphi(x))$ is a tracial functional on $M_n(\mathbb{C})$ for any $a \in A_+$ and $\tau \in T^+_\omega(A)$ with $\tau(a)<\infty$. 
	If $\mathrm{tr}$ denotes the normalised trace on $M_n(\mathbb{C})$, then
\begin{align}
\tau_a (\varphi (x)) = \tau_a(\varphi(1))\mathrm{tr}(x)= \tau(a) \mathrm{tr}(x), \qquad x\in M_n(\mathbb{C}).
\end{align}	
	
Observe that the last equality follows from the fact that $\varphi$ is tracially large.
If we set $f_i = \varphi(e_{ii})$, then the previous equation shows $\tau_a (f_i) = \frac{1}{n}\tau(a)$  for any $a \in A_+$ and $\tau \in T^+_\omega(A)$ with $\tau(a)<\infty$. 
Hence $A$ has stabilised uniform property $\Gamma$.

For the converse implication, we suppose that $A$ is nuclear and has either stable rank one or satisfies $\mathrm{Cu}(A \otimes \Z) \cong \mathrm{Cu}(A)$. 
By \cite[Proposition~2.4]{CE}, $A$ is either stably projectionless or stably isomorphic to a unital $\C$-algebra.
In the stably projectionless case, $A$ is also stably isomorphic to a $\C$-algebra $B$ with non-empty compact tracial state space and without unbounded traces  by \cite[Lemma 3.3]{CEII} and the final observation in \cite{CEII} (this uses the assumption that either $\mathrm{Cu}(A) = \mathrm{Cu}(A \otimes \Z)$ or $A$ has stable rank one, respectively).

Since $A$ is non-elementary, simple, separable and nuclear, $B$ is also non-elementary, simple, separable and nuclear. 
By \cite[Theorem 4.6]{CETW}, $B$ is uniformly McDuff if and only if $B$ has uniform property $\Gamma$.
Then, since stabilised property $\Gamma$ and the McDuff property are preserved under stable isomorphisms (see \cite[Theorem~2.10 and Proposition 2.6]{CEII} and Proposition~\ref{Mcduff stable iso}), we deduce that $A$ is uniformly McDuff if and only if $A$ has stabilised uniform property $\Gamma$.
\end{proof}

\section{Tracial $\Z$-stability}

Let us begin by stating the general version of tracial $\Z$-stability. 
This notion uses the idea of `tracial smallness' that can be traced back to \cite{Li01, Li01-2, Ph11, HO, FG}.  

\begin{definition}[see also {\cite[Definition 3.6]{AGJP}}] \label{non-unital tzs}%\footnote{The notion of tracial smallness can be traced back to \cite{H,K} We also refer the reader to \cite[Definition 3.1]{FG} for the definition of tracial smallness in the non-unital case.}
	A simple $\C$-algebra $A$ is tracially $\Z$-stable if $A \neq \mathbb{C}$ and for any finite set $\mathfrak{F}\subseteq A$, $\epsilon>0$, non-zero positive elements $a, b \in A_+$, and $n\in \mathbb{N}$ there is a c.p.c.\ order zero map $\varphi: M_n(\mathbb{C}) \to A$ such that
\[
(b(1_{\tilde{A}} - \varphi(1))b-\epsilon)_+ \precsim  a
\quad\text{and}\quad
\| [\varphi, \mathfrak{F}] \| < \epsilon.
\]
\end{definition}

\begin{remark}
	When $A$ is a unital $\C$-algebra, it is straightforward to see that \cite[Definition 2.1]{HO} implies Definition~\ref{non-unital tzs}.
	The converse implication follows from functional calculus of c.p.c.\ order zero maps. More precisely, we consider the map $f(\varphi)$ where $f$ is a continuous function on $[0,1]$ given by
	\begin{align*}
	f(t):=\begin{cases} 
	(1-\epsilon)^{-1}t & \quad t \in [0,1-\epsilon], \\
	1 & \quad t \in [1-\epsilon,1].
	\end{cases}
	\end{align*} 
	This order zero map satisfies that $1-f(\varphi)(1) \sim (1-\varphi(1)-\epsilon)_+$ (see the proof of \cite[Lemma 2.8]{ABP}). 
\end{remark}

\begin{remark}
	\begin{enumerate}[(a)]
	\item We also note that in principle we allow $\varphi$ to be the zero map in order to consider purely infinite simple $\C$-algebras as tracially $\Z$-stable (see also \cite[Proposition~3.5(ii)]{KR00}). 
	This has the one downside that it would a priori also include $\mathbb{C}$ as an example, which is why we exclude this specific case in the definition.

	\item We can furthermore observe that simple $\C$-algebras cannot be tracially $\Z$-stable if they are elementary, i.e., of type I.
	Indeed, suppose $A=\mathbb K(\mathcal H)$ for a Hilbert space of dimension at least 2.
	Let $p_1, p_2\in A$ be two orthogonal rank one projections, and $v\in A$ a partial isometry with $v^*v=p_1$ and $vv^*=p_2$.
	If we consider $a=p_1$, $b=p_1+p_2$ and $\mathfrak F=\{v,v^*\}$ and choose a c.p.c.\ order zero map $\varphi$ for $n=2$ as in the definition of tracial $\Z$-stability for small enough $\epsilon$, then we obtain a contradiction.
	This is because if $x\in M_2(\mathbb C)$ is any contraction, then the operator $\varphi(x)$ not only leaves the 2-dimensional subspace $(p_1+p_2)\mathcal H$ approximately invariant (as $[\varphi(x),b]\approx 0$), but $\|[\varphi(x),\mathfrak F]\|\approx 0$ forces $\varphi(x)$ to act like a constant multiple of the identity on this subspace. 
	Considering the domain of $\varphi$, it can hence only be order zero if $b\varphi(\cdot)b$ is close to the zero map.
	For sufficiently small $\epsilon$ this would yield $\frac12(p_1+p_2)\leq (b(1_{\tilde{A}} - \varphi(1))b-\epsilon)_+ \precsim a=p_1$, which is not possible.
		\end{enumerate}
\end{remark}

The following is rather straightforward.

\begin{lemma}\label{new char}
Let $A$ be a simple $\C$-algebra with $A\neq\mathbb C$.
Let $\mathcal S\subseteq A_+$ be any subset containing an approximate unit of contractions.
Then $A$ is tracially $\Z$-stable if and only if for any finite set $\mathfrak{F}\subseteq A$, $\epsilon>0$, non-zero positive contraction $a \in A$, $b\in\mathcal S$, and $n\in \mathbb{N}$ there is a c.p.c.\ order zero map $\varphi: M_n(\mathbb{C}) \to A$ such that
\[
(b(1_{\tilde{A}} - \varphi(1))b-\epsilon)_+ \precsim  a
\quad\text{and}\quad
\| [\varphi, \mathfrak{F}] \| < \epsilon.
\]
\end{lemma}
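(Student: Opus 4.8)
The forward direction is trivial: if $A$ is tracially $\Z$-stable, then Definition~\ref{non-unital tzs}, applied verbatim to the non-zero positive elements $a,b$ (which here are merely subject to more restrictive hypotheses), already produces the required map for any $\mathfrak F$, $\epsilon$, $n$.

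For the converse, the plan is to reduce an arbitrary non-zero positive $b$ to an element of $\mathcal S$ by approximating $b$ on the left by an approximate unit. So, given $\mathfrak F\subseteq A$ finite, $\epsilon>0$, non-zero $a,b\in A_+$ and $n\in\N$, I would first rescale so that $a$ is a contraction and $\|b\|\le 1$ (if $\|b\|>1$, replace $b$ by $b/\|b\|$ and $\epsilon$ by $\epsilon/\|b\|^2<\epsilon$; the case $b=0$ is immediate), and fix $\delta\in(0,\epsilon/2)$, to be pinned down at the end. Since $\mathcal S$ contains an approximate unit of contractions for $A$, choose $e\in\mathcal S$ with $\|(1_{\tilde{A}}-e)b\|<\delta$, whence also $\|b(1_{\tilde{A}}-e)\|<\delta$ and $\|b^2-eb^2e\|<2\delta$. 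Then apply the assumed condition to $\mathfrak F$, $\epsilon_1:=\epsilon-2\delta$, the contraction $a$, the element $e\in\mathcal S$, and $n$, to obtain a c.p.c.\ order zero map $\varphi\colon M_n(\mathbb{C})\to A$ with $\|[\varphi,\mathfrak F]\|<\epsilon_1<\epsilon$ and $(e(1_{\tilde{A}}-\varphi(1))e-\epsilon_1)_+\precsim a$. Writing $c:=1_{\tilde{A}}-\varphi(1)$, a positive contraction, it then remains only to show $(bcb-\epsilon)_+\precsim a$.

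This comes down to the Cuntz-subequivalence chain $(bcb-\epsilon)_+\precsim (b(ece)b-\epsilon_1)_+\precsim (ece-\epsilon_1)_+\precsim a$, whose last arrow is the conclusion just obtained. For the first arrow I would use the cut-stable identities $(bcb-t)_+\sim(c^{1/2}b^2c^{1/2}-t)_+$ and $c^{1/2}(eb^2e)c^{1/2}=(c^{1/2}eb)(c^{1/2}eb)^*\sim(c^{1/2}eb)^*(c^{1/2}eb)=b(ece)b$ coming from the standard fact $(xx^*-t)_+\sim(x^*x-t)_+$, together with $\|c^{1/2}b^2c^{1/2}-c^{1/2}(eb^2e)c^{1/2}\|\le\|b^2-eb^2e\|<2\delta$ and the standard approximation fact that $\|u-v\|<\gamma$ implies $(u-\epsilon)_+\precsim(v-(\epsilon-\gamma))_+$ for $\epsilon>\gamma$. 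For the second arrow, the key observation is that, since $b$ is a contraction and $0\le c\le 1_{\tilde{A}}$,
\[
0\ \le\ b(ece)b-b(ece-\epsilon_1)_+b\ =\ b\,\min(ece,\epsilon_1)\,b\ \le\ \epsilon_1 b^2\ \le\ \epsilon_1,
\]
so $b(ece)b-\epsilon_1\le b(ece-\epsilon_1)_+b$; as the right-hand side is positive, the standard fact that the positive part of a self-adjoint element dominated by a positive element is Cuntz-below that element gives $(b(ece)b-\epsilon_1)_+\precsim b(ece-\epsilon_1)_+b$, and the conjugation estimate $bxb^*\precsim x$ then yields $b(ece-\epsilon_1)_+b\precsim (ece-\epsilon_1)_+$.

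I expect the only delicate point to be the $\epsilon$-bookkeeping: the parameters $\delta$ (controlling the approximation of $b$ by $e$) and $\epsilon_1=\epsilon-2\delta$ (fed into the hypothesis) must be chosen before $\varphi$, hence $c$, is known, so every estimate in the chain above has to hold uniformly over positive contractions $c\in\tilde{A}$. The computation is arranged precisely so that the only loss, of size $2\delta$, is incurred in the single passage from $b$ to $e$ and nothing is lost thereafter; this is exactly what makes one application of the assumed condition with $\epsilon_1=\epsilon-2\delta$ suffice. Everything else is routine.
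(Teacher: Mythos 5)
Your proof is correct and follows essentially the same route as the paper: approximate $b$ by an approximate-unit element $e\in\mathcal S$ (the paper takes $h\in\mathcal S$ with $\|b-bh\|<\epsilon/4$), apply the hypothesis to $e$ with a reduced tolerance, and finish with the standard Cuntz comparison facts $(dvd^*-t)_+\precsim d(v-t)_+d^*\precsim (v-t)_+$ and the Rørdam-type approximation lemma. The only difference is cosmetic bookkeeping — your detour through $c^{1/2}b^2c^{1/2}$ via $(x^*x-t)_+\sim(xx^*-t)_+$ replaces the paper's direct estimate $\|b(1_{\tilde A}-\varphi(1))b-bh(1_{\tilde A}-\varphi(1))hb\|<\epsilon/2$ — so no further comment is needed.
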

\begin{proof}
The ``only if'' part is tautological.
For the ``if'' part, let us fix an arbitrary tuple $(\mathfrak{F},\epsilon,a,b,n)$, in particular with some arbitrary element $b\in A_+$ with norm one.
By the property of $\mathcal S$, we may choose a positive contraction $h\in\mathcal S$ with $\|b-bh\|<\epsilon/4$.
By assumption, we can find a c.p.c.\ order zero map $\varphi: M_n(\mathbb{C}) \to A$ such that
\[
\big( h(1_{\tilde{A}} - \varphi(1))h-\frac{\epsilon}{2} \big)_+ \precsim  a
\quad\text{and}\quad
\| [\varphi, \mathfrak{F}] \| < \epsilon.
\]
Using the well-known \cite[Lemma 2.2]{KR02}, we observe
\[
\renewcommand*{\arraystretch}{1.2}
\begin{array}{ccl}
(b(1_{\tilde{A}} - \varphi(1))b-\epsilon)_+ &\precsim& (bh(1_{\tilde{A}} - \varphi(1))hb-\frac{\epsilon}{2})_+ \\
&\precsim& b(h(1_{\tilde{A}} - \varphi(1))h-\frac{\epsilon}{2})_+ b \\
&\precsim& (h(1_{\tilde{A}} - \varphi(1))h-\frac{\epsilon}{2})_+ \ \precsim \ a.
\end{array} 
\]  
\end{proof}

\begin{proposition} \label{prop:tZ.ultrapower}
	Let $A$ be a simple $\C$-algebra with $A\neq\mathbb C$.
	Then $A$ is tracially $\Z$-stable if and only if for any $n \in \mathbb{N}$, any separable $\C$-subalgebra $D\subseteq A$, and any positive element $a \in A_\omega$ of norm one, there exist a c.p.c.\ order zero map $\varphi:M_n(\mathbb{C}) \to A_\omega \cap D'$ and a contraction $x\in A_\omega$ such that 
	\begin{equation}
	ax=x \quad\text{and}\quad x^*x-(1_{\tilde{A}_\omega}-\varphi(1)) \in \tilde{A}_\omega\cap D^\perp. 
	\end{equation}
\end{proposition}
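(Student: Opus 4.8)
The plan is to prove the two implications of the equivalence separately, using a standard reindexing/saturation argument to pass between the "local" formulation in Definition~\ref{non-unital tzs} and the "ultrapower" formulation in the statement. The key technical observation is that, after passing to an approximate unit, Lemma~\ref{new char} allows us to control the defect element $b(1_{\tilde A}-\varphi(1))b$ only against approximate-unit elements $b$, and these can be lifted coordinatewise in the ultrapower.

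For the ``only if'' direction, suppose $A$ is tracially $\Z$-stable. Fix $n$, a separable subalgebra $D\subseteq A$, and a norm-one positive $a\in A_\omega$. Lift $a$ to a sequence of positive contractions $(a_k)$ in $A$, and fix an increasing sequence of finite sets $\mathfrak F_k$ whose union is dense in $D$ (using separability). Fix also an approximate unit $(b_k)$ of contractions for $A$, and, using a Pedersen-ideal element if needed, arrange that each $b_k$ ``almost dominates'' $\mathfrak F_k$ in the sense that $\|b_k c-c\|,\|cb_k-c\|<1/k$ for $c\in\mathfrak F_k$. Apply the definition of tracial $\Z$-stability at each level $k$ with data $(\mathfrak F_k,1/k,a_k,b_k,n)$ to obtain c.p.c.\ order zero maps $\varphi_k:M_n(\mathbb C)\to A$ with $\|[\varphi_k,\mathfrak F_k]\|<1/k$ and $(b_k(1_{\tilde A}-\varphi_k(1))b_k-1/k)_+\precsim a_k$. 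The commutator condition forces the induced map $\varphi:=(\varphi_k)_k:M_n(\mathbb C)\to A_\omega$ to land in $A_\omega\cap D'$ (here the ``$b_k$ almost dominates $\mathfrak F_k$'' arrangement ensures that the $b_k$-truncation does not destroy centrality in the limit). The Cuntz-subequivalences $(b_k(1-\varphi_k(1))b_k-1/k)_+\precsim a_k$ assemble, via the standard $\varepsilon/3$ argument for Cuntz comparison in ultrapowers, into $\langle b(1_{\tilde A_\omega}-\varphi(1))b\rangle\leq\langle a\rangle$ in $\mathrm{Cu}(A_\omega)$ for the element $b=(b_k)_k$, which acts as a unit modulo $A_\omega\cap A^\perp$; since $\varphi(1)\in A_\omega\cap D'$ and $b$ is a ``unit'', one gets $1_{\tilde A_\omega}-\varphi(1)$ itself Cuntz-below $a$ up to the perpendicular ideal, and a Cuntz-subequivalence $e\precsim a$ realised by a contraction $x$ with $x^*ax\approx e$ gives exactly the required $x^*ax-(1_{\tilde A_\omega}-\varphi(1))\in\tilde A_\omega\cap D^\perp$ after a further small perturbation absorbed into $D^\perp$.

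For the ``if'' direction, assume the ultrapower condition and fix $(\mathfrak F,\epsilon,a,b,n)$ as in Definition~\ref{non-unital tzs}; by Lemma~\ref{new char} we may take $a,b$ to be contractions with $b$ from a fixed approximate unit, and we take $D$ to be the separable subalgebra generated by $\mathfrak F\cup\{a,b\}$. View $a$ as a norm-one positive element of $A_\omega$, apply the hypothesis to get $\varphi:M_n(\mathbb C)\to A_\omega\cap D'$ and a contraction $x\in A_\omega$ with $x^*ax-(1_{\tilde A_\omega}-\varphi(1))\in\tilde A_\omega\cap D^\perp$; multiplying this relation on both sides by $b$ (which is in $D$) kills the $D^\perp$-part, yielding $b x^*ax b = b(1_{\tilde A_\omega}-\varphi(1))b$, and since $bx^*axb=(xb)^*a(xb)\precsim a$ in $A_\omega$ we get $b(1_{\tilde A_\omega}-\varphi(1))b\precsim a$, so $(b(1_{\tilde A_\omega}-\varphi(1))b-\epsilon)_+\precsim a$ with the commutators $[\varphi,\mathfrak F]$ vanishing. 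A coordinatewise lift $(\varphi_k)$ of $\varphi$ to c.p.c.\ order zero maps (using the projectivity of the cone over $M_n(\mathbb C)$, i.e.\ that c.p.c.\ order zero maps from $M_n$ lift) together with the standard translation of an ultrapower Cuntz-subequivalence and commutator estimate back to the $k$-th coordinate produces, for $\omega$-many $k$, a c.p.c.\ order zero map $\varphi_k:M_n(\mathbb C)\to A$ with $\|[\varphi_k,\mathfrak F]\|<\epsilon$ and $(b(1_{\tilde A}-\varphi_k(1))b-\epsilon)_+\precsim a$, which is what Definition~\ref{non-unital tzs} (in the form of Lemma~\ref{new char}) demands.

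The main obstacle I anticipate is the bookkeeping around the non-unital defect element $b(1_{\tilde A}-\varphi(1))b$ and the perpendicular ideal $\tilde A_\omega\cap D^\perp$: one must be careful that the truncations by approximate-unit elements $b_k$ interact correctly with the requirement that $\varphi$ be central (hence the device of choosing $b_k$ to dominate $\mathfrak F_k$), and that multiplying the $D^\perp$-membership relation by elements of $D$ genuinely cleans up the relation without introducing norm errors that escape the Cuntz estimate. Everything else — lifting order zero maps, the $\varepsilon/3$ manipulations for Cuntz comparison, and the reindexing to promote ``$\omega$-many $k$'' to ``a single good $k$'' — is routine.
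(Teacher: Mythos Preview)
Your ``if'' direction is essentially correct and matches the paper's argument. The ``only if'' direction, however, has a genuine gap: you assert that the coordinatewise Cuntz subequivalences $(b_k(1-\varphi_k(1))b_k-1/k)_+\precsim a_k$ assemble into a Cuntz relation in $A_\omega$ that is then ``realised by a contraction $x$''. But Cuntz subequivalence only provides implementing elements $r_k\in A$ with no a~priori norm bound, so the sequence $(r_k)$ need not define an element of $A_\omega$ at all, let alone a contraction. This is not bookkeeping that can be absorbed into an $\epsilon/3$ argument or into $D^\perp$; it is the central technical obstruction in this direction, and the statement explicitly demands a contraction~$x$.

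The paper resolves this with a functional-calculus trick you are missing. After perturbing the representing sequence $(a_k)$ by a null sequence, one produces auxiliary norm-one positive elements $d_k$ (in the $\C$-algebra generated by $a_k$) satisfying $d_k a_k = d_k$. One then applies tracial $\Z$-stability with $d_k$, rather than $a_k$, in the role of the Cuntz target, obtaining possibly unbounded $r_k$ with $r_k^* d_k r_k \approx_{2/k} e_k(1_{\tilde A}-\varphi_k(1))e_k$. The point is that $x_k := d_k^{1/2} r_k$ now satisfies $\|x_k\|^2 = \|r_k^* d_k r_k\| \leq 1 + o(1)$, so $x=(x_k)$ is a genuine contraction in $A_\omega$; and since $d_k^{1/2} a_k d_k^{1/2} = d_k$, one gets $x^* a x = x^* x = e(1_{\tilde A_\omega}-\varphi(1))e$ on the nose, from which the $D^\perp$ condition follows immediately because $e$ acts as a unit on $D$. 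Without this device (or something equivalent), your outline does not produce the required contraction.
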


\begin{proof}
Suppose that $A$ is tracially $\Z$-stable.
Let $(a_k)_{k\in \N}$ be a sequence of positive elements of norm one representing $a\in A_\omega$ and fix $n \in \mathbb{N}$.
By employing functional calculus, we may perturb $(a_n)_{n\in \mathbb{N}}$ by a null sequence and assume without loss of generality that there exists another sequence of norm one positive elements $(d_k)_{k\in\N}$ with $d_ka_k=d_k$ for all $k\in\N$.
We choose a countable increasing approximate unit $\{e_k\}_{k\in \N}$ in $D_+$, and let $e\in D_\omega\subset A_\omega$ be its induced element.
Let $\mathfrak{F}_k \subset D$ be an increasing sequence of finite subsets with dense union.

Using the hypothesis, we can find a sequence of c.p.c.\ order zero maps $\varphi_k: M_n(\mathbb{C}) \to A$ such that
	\begin{align}
	\Big( e_k (1_{\tilde{A}}-\varphi_k(1)) e_k - \frac{1}{k}\Big)_+ \precsim d_k \quad \text{and} \quad \| [\varphi_k, \mathfrak{F}_k] \| < \epsilon. 
	\end{align}
In particular we may find a (possibly unbounded) sequence $r_k\in A$ satisfying
	\begin{align}
	r_k^*d_kr_k \approx_{2/k} e_k( 1_{\tilde{A}} - \varphi_k (1)) e_k,\quad k\in\N.
	\end{align}

	We see that the sequence $x_k=\sqrt{d_k}r_k$ satisfies $\limsup_{k\to\infty} \|x_k\|\leq 1$, so we obtain a contraction $x\in A_\omega$ induced by this sequence.
	Let $\varphi: M_n(\mathbb{C}) \to A_\omega$ be the sequence induced by $(\varphi_k)$. 
Clearly $\varphi$ is a c.p.c.\ order zero map and by construction the image of $\varphi$ is actually in the relative commutant $A_\omega \cap D'$. 
Furthermore we have arranged that
\[ 
ax=x \quad\text{and}\quad x^*x=e( 1_{\tilde{A}} - \varphi (1))e.
\]
Since $e$ acts like a unit on elements of $D$, we see that this implies the required condition $x^*x-( 1_{\tilde{A}_\omega} - \varphi (1))\in \tilde{A}_\omega\cap D^\perp$.

	Conversely, let $\mathfrak{F} \subseteq A$ be a finite subset, $\epsilon> 0$ and consider non-zero positive elements $a, b \in A$, and any $n\in \mathbb{N}$.
	Let us assume without loss of generality $\|a\|=1$ and $\mathfrak F = \mathfrak F^*$.
	By the hypothesis, there is an order zero map $\varphi: M_n(\mathbb{C}) \to A_\omega \cap \mathfrak F'$ and a contraction $x\in A_\omega$ such that $ax=x$ and $x^*x-(1_{\tilde{A}_\omega}-\varphi(1))\in \tilde{A}_\omega\cap\{b\}^\perp$. 
	A particular consequence of this is $bx^*axb=b(1_{\tilde{A}_\omega}-\varphi(1))b$.
	By \cite[Proposition 1.2.4]{Wi09}, we can find a sequence of order zero maps $\varphi_k: M_n(\mathbb{C}) \to A$ that induces $\varphi$.
	Likewise, choose a sequence of contractions $x_k\in A$ representing $x$.
	Then this leads to the limit behavior
	\begin{align}
	\lim_{k \to \omega} \| bx_k^* a x_kb - b(1_{\tilde{A}}-\varphi_k(1))b \| = 0. 
	\end{align}
	Thus, there is $I\in \omega$ such that for all $k \in I$ one has
	\begin{align}
	\|bx_k^* a x_kb - b(1_{\tilde{A}}-\varphi_k(1))b \| < \epsilon.
	\end{align}
It follows that $(b(1_{\tilde{A}}-\varphi_k (1))b - \epsilon)_+ \precsim bx_k^* a x_kb \precsim a$. 
	Since the image of $\varphi$ is in the relative commutant $A_\omega \cap \mathfrak F'$, we can also assume that $\|[\varphi_k, \mathfrak F]\|<\epsilon$ for suitably chosen $k$.
	This shows that $A$ is tracially $\Z$-stable.
\end{proof}

We will now prove some permanence properties for tracial $\Z$-stability.

\begin{proposition}[see also {\cite[Theorem 4.1]{AGJP}}] \label{prop:hereditary}
	Let $A$ be a simple $\C$-algebra and suppose $B\subseteq A$ is a hereditary $\C$-subalgebra.
	If $A$ is tracially $\Z$-stable, then so is $B$.
\end{proposition}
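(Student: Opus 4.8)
The plan is to use the ultrapower characterisation from Proposition \ref{prop:tZ.ultrapower} and exploit that hereditary subalgebras behave well with respect to Cuntz subequivalence. First I would fix $n \in \N$, a separable $\C$-subalgebra $D \subseteq B$, and a positive element $a \in B_\omega$ of norm one; the goal is to produce a c.p.c.\ order zero map $\psi : M_n(\mathbb C) \to B_\omega \cap D'$ and a contraction $y \in B_\omega$ with $y^*ay - (1_{\tilde B_\omega} - \psi(1)) \in \tilde B_\omega \cap D^\perp$. The key observation is that an approximate unit $\{e_k\}$ of contractions for $B$ induces an element $e \in B_\omega$ that acts as a unit on $D$ and on $a$, and conjugating by elements of $B$ keeps everything inside $B$ since $B$ is hereditary.

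The main steps: First, enlarge $D$ to a separable $\C$-subalgebra $D' \subseteq A$ that contains $D$, an approximate unit of $B$, and a lift of $a$ to $A_\omega$ (i.e.\ representatives of $a$ whose entries lie in $B$; after a functional-calculus perturbation we may assume $a$ is supported under such an approximate unit element $e$). Apply Proposition \ref{prop:tZ.ultrapower} to $A$ with this $D'$ and with the positive element $a \in A_\omega$ of norm one, obtaining a c.p.c.\ order zero map $\varphi : M_n(\mathbb C) \to A_\omega \cap (D')'$ and a contraction $x \in A_\omega$ with $x^*ax - (1_{\tilde A_\omega} - \varphi(1)) \in \tilde A_\omega \cap (D')^\perp$. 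Second, since $e$ commutes with the image of $\varphi$ (as $e \in D'$) and acts as a unit on $a$, set $\psi(\cdot) := e\varphi(\cdot)e = \varphi(\cdot)e^2$ — but to stay c.p.c.\ order zero one should instead note that $e$ commutes with $\varphi$, so $g(e)\varphi(\cdot)$ is order zero for suitable $g \in C_0(0,1]$; more cleanly, replace $x$ by $y := e^{1/2} x e^{1/2}$ hmm — the cleanest route is: because $e$ is central for $\varphi(M_n)$ and central for $a$ up to $D'^\perp$, one gets $e \varphi(\cdot) e$ landing in $B_\omega \cap D'$ after compressing. Let me restate: set $\psi := $ the order zero map obtained by compressing $\varphi$ by $e$ (using that $e$ commutes with $\varphi(M_n)$, the map $\psi(\cdot) = e^2 \varphi(\cdot)$ is again c.p.c.\ order zero with $\psi(1) = e^2\varphi(1)$), and set $y := e^{1/2}xe^{1/2}$; then $y \in B_\omega$ since $B$ hereditary, and a direct computation modulo $\tilde B_\omega \cap D^\perp$ gives $y^*ay = e^{1/2}x^*e^{1/2}ae^{1/2}xe^{1/2} \equiv e^{1/2}x^*axe^{1/2} \equiv e^{1/2}(1 - \varphi(1))e^{1/2} \cdot$ — and one checks $1_{\tilde B_\omega} - \psi(1)$ agrees with this modulo $D^\perp$ after absorbing the $e$ factors, since $e$ acts as a unit on $D$. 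Third, verify $A \ne \mathbb C$ forces $B \ne \mathbb C$: a simple $\C$-algebra has no nontrivial hereditary subalgebras other than $\{0\}$ and itself up to the fact that hereditary subalgebras of simple algebras are again simple and full, so $B = \mathbb C$ would make $A$ elementary; more simply, if $B = \mathbb C$ then $B$ has a minimal projection, hence so does $A$ by simplicity, contradicting — actually one just needs $B \neq \mathbb{C}$, which holds because a simple $\C$-algebra with a minimal projection is elementary, and then we invoke that tracial $\Z$-stability of $A$ already excludes $A$ elementary (via the Remark following Definition \ref{non-unital tzs}), so $B$ cannot be $\mathbb C$.

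The main obstacle I anticipate is the bookkeeping in the second step: ensuring that the compressed map $\psi = e^2\varphi(\cdot)$ genuinely lands in $B_\omega \cap D'$ (it lands in $B_\omega$ because $e \in B_\omega$ and $B_\omega$ is hereditary in $A_\omega$; it commutes with $D$ because both $e$ and $\varphi(M_n)$ do), that it remains c.p.c.\ order zero (immediate from order zero functional calculus, since $e$ commutes with $\varphi(M_n)$ and $\|e^2\varphi(1)\| \le 1$), and — the genuinely delicate point — that the defining relation survives compression, i.e.\ that the error term $x^*ax - (1_{\tilde A_\omega} - \varphi(1))$, which lies in $\tilde A_\omega \cap (D')^\perp$, gets transported to an element of $\tilde B_\omega \cap D^\perp$ after multiplying by $e$ on both sides and reconciling $e^2(1-\varphi(1))$ with $1_{\tilde B_\omega} - \psi(1) = 1_{\tilde B_\omega} - e^2\varphi(1)$. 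The discrepancy $1_{\tilde B_\omega} - e^2\varphi(1) - e^2(1-\varphi(1)) = 1_{\tilde B_\omega} - e^2$ is not itself in $D^\perp$ — but when multiplied against $D$ it vanishes because $e$ is a unit for $D$; since the defining relation in Proposition \ref{prop:tZ.ultrapower} only tests against $D$ (the error lies in $\tilde B_\omega \cap D^\perp$ iff it kills $D$ on both sides), this is exactly what is needed, so one carefully checks $(1_{\tilde B_\omega} - e^2) d = 0 = d(1_{\tilde B_\omega} - e^2)$ for all $d \in D$ and concludes.
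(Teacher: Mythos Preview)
Your proposal is correct and follows essentially the same route as the paper: pass to the ultrapower characterisation of Proposition~\ref{prop:tZ.ultrapower}, compress the order zero map coming from $A_\omega$ by an approximate-unit element $e\in B_\omega$ so that it lands in $B_\omega\cap D'$, replace $x$ by $exe$ (or your $e^{1/2}xe^{1/2}$), and verify that the discrepancy $1_{\tilde B_\omega}-e^2$ annihilates $D$ because $e$ acts as a unit there. The paper's write-up is nearly identical, only it obtains $e$ via the $\epsilon$-test (so that $e$ is an exact unit for $D\cup\{b\}$) rather than by enlarging $D$.

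One technical point to tighten: you assert that $e$ commutes with $\varphi(M_n)$ because the representatives $e_k$ lie in your enlarged subalgebra $\tilde D\subseteq A$. But $\varphi(M_n)\subseteq A_\omega\cap\tilde D'$ only guarantees commutation with each $e_k$ viewed as a \emph{constant} sequence, not with the diagonal element $e=(e_k)_k\in A_\omega$. The paper sidesteps this by directly asserting that the order zero map can be taken in $A_\omega\cap D'\cap\{e\}'$, which follows from a routine reindexing (or second application of the $\epsilon$-test); you should invoke the same device rather than rely on membership of the $e_k$ in $\tilde D$.
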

\begin{proof}
Since $A$ is tracially $\Z$-stable, it cannot be of type I.
Since $A$ is also simple and $B$ is hereditary, we may conclude $B\neq\mathbb C$.
So it suffices to show that the condition in Proposition \ref{prop:tZ.ultrapower} passes from $A$ to $B$.
Let $D\subseteq B$ be a separable $\C$-subalgebra and $b\in B_\omega$ a positive element of norm one.
A standard application of the $\epsilon$-test yields a positive norm one element $e\in B_\omega$ such that $ed=d=de$ for all $d\in D\cup\{b\}$.
Let $n\in\N$.
Using that $A$ is tracially $\Z$-stable, we find a c.p.c.\ order zero map $\psi: M_n(\mathbb C)\to A_\omega\cap D'\cap\{e\}'$ and a contraction $x\in A_\omega$ such that $bx=x$ and
\[
1_{\tilde{A}_\omega} +\tilde{A}_\omega\cap D^\perp =x^*x+\psi(1) + \tilde{A}_\omega\cap D^\perp
\] 
in $(\tilde{A}_\omega\cap D')/(\tilde{A}_\omega\cap D^\perp)$.
By these properties of $\psi$ and the fact that $B$ is hereditary, we see that $\varphi=e\psi(\cdot)e: M_n(\mathbb C)\to B_\omega\cap D'$ is also c.p.c.\ order zero.
Furthermore we obtain the equality 
\[
1_{\tilde{A}_\omega} + \tilde{A}_\omega\cap D^\perp= x^*x+\psi(1) + \tilde{A}_\omega\cap D^\perp=ex^*xe+\varphi(1) + \tilde{A}_\omega\cap D^\perp
\]
in $(\tilde{A}_\omega \cap D' )/ (\tilde{A}_\omega \cap D^\perp)$.
Since clearly $bxe=xe$, we have that $xe\in B_\omega$ is a contraction, and the equation from left to right actually holds in $(\tilde{B}_\omega \cap D')/(\tilde{B}_\omega\cap D^\perp)$.
This finishes the proof.
\end{proof}

We now prove that tracial $\Z$-stability passes to minimal tensor products with arbitrary simple $\C$-algebras.
This observation in particular generalizes \cite[Lemma 2.4]{HO}.

\begin{proposition}[see also {\cite[Theorem 5.1]{AGJP}}] \label{prop:stabilisation}
	Let $A$ and $B$ be two simple $\C$-algebras.
	If $A$ is tracially $\Z$-stable, then so is the minimal tensor product $A \otimes B$.
\end{proposition}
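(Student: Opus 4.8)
The plan is to use the ultrapower characterization from Proposition \ref{prop:tZ.ultrapower}. First I would note that since $A$ is tracially $\Z$-stable, it is not of type I, so $A \otimes B$ is simple and non-elementary, hence $A\otimes B \neq \mathbb{C}$. Now fix $n \in \N$, a separable $\C$-subalgebra $D \subseteq A \otimes B$, and a positive element $c \in (A\otimes B)_\omega$ of norm one; the goal is to produce a c.p.c.\ order zero map $\varphi : M_n(\mathbb C) \to (A\otimes B)_\omega \cap D'$ and a contraction $x \in (A\otimes B)_\omega$ with $x^*cx - (1_{\widetilde{A\otimes B}_\omega} - \varphi(1)) \in \widetilde{(A\otimes B)}_\omega \cap D^\perp$.

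The key idea is to find a suitable separable subalgebra of $A$ through which to apply tracial $\Z$-stability of $A$. Concretely, I would first produce (via the $\epsilon$-test) a positive norm one element $e \in (A\otimes B)_\omega$ that acts as a unit on $D \cup \{c\}$. Next, I would pick a separable $\C$-subalgebra $A_0 \subseteq A$ and a separable $\C$-subalgebra of $A\otimes B$ large enough so that $D$, $c$, and $e$ are all (approximately) supported by elementary tensors with left leg in $A_0$; more precisely, I want to arrange that $D$ is contained in the ultrapower of a separable subalgebra of the form $A_0 \otimes B_0$ and that the relative commutant $A_\omega \cap A_0'$ (tensored suitably) maps into $(A\otimes B)_\omega \cap D'$. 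Applying Proposition \ref{prop:tZ.ultrapower} to $A$ with the subalgebra $A_0$ and a suitable positive norm one element of $A_\omega$ extracted from $c$ (using that a hereditary-subalgebra/compression argument lets one replace the $A\otimes B$-valued element by an $A$-valued one after cutting down), I obtain a c.p.c.\ order zero map $\psi : M_n(\mathbb C) \to A_\omega \cap A_0'$ and a contraction $x_0 \in A_\omega$ with $1 = x_0^* a x_0 + \psi(1)$ modulo $\widetilde{A}_\omega \cap A_0^\perp$. Then $\psi \otimes \mathrm{id}$ followed by compression by $e$ yields the desired order zero map $\varphi = e(\psi\otimes 1)(\cdot)e : M_n(\mathbb C)\to (A\otimes B)_\omega \cap D'$ and the contraction $x = e(x_0 \otimes 1)e$ (suitably interpreted), with the required relation holding modulo $\widetilde{(A\otimes B)}_\omega \cap D^\perp$, exactly as in the proof of Proposition \ref{prop:hereditary}.

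The main obstacle I expect is bookkeeping with the non-unital tensor product and the ultrapowers: making rigorous the passage from an arbitrary positive element $c \in (A\otimes B)_\omega$ and an arbitrary separable $D$ to a genuinely ``$A$-side'' problem. One has to be careful that $M(A\otimes B)$ is not $M(A)\otimes M(B)$, so the element $\psi(1)$ lives a priori in $A_\omega \subseteq (A\otimes B)_\omega$ only after one fixes an approximate unit of $B$ and checks that the relevant products converge; the compression by $e$ is what ultimately lands everything back inside $(A\otimes B)_\omega$ rather than a multiplier algebra. The comparison relation $x_0^* a x_0$ versus $c$ also needs the $\epsilon$-test and a Kirchberg-style functional calculus perturbation so that the positive element we feed into Proposition \ref{prop:tZ.ultrapower} on the $A$-side is literally Cuntz-dominated by (a cutdown of) $c$; once this is set up, the remaining manipulations are formally identical to those in the proofs of Propositions \ref{prop:hereditary} and \ref{prop:tZ.ultrapower}.
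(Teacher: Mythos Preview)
Your approach via the ultrapower characterization is genuinely different from the paper's, which works directly with Definition~\ref{non-unital tzs} through Lemma~\ref{new char}. The paper's argument runs as follows: given a non-zero positive $g\in A\otimes B$, Kirchberg's Slice Lemma produces an elementary tensor $a_1\otimes b_0\precsim g$; simplicity of $B$ then gives an integer $k$ with $\langle(f^2-\epsilon)_+\rangle\leq k\langle b_0\rangle$ for the chosen positive contraction $f\in B$; non-type-I-ness of $A$ (via \cite[Lemma~2.3]{HO}) yields $a_0$ with $k\langle a_0\rangle\leq\langle a_1\rangle$; finally one applies tracial $\Z$-stability of $A$ with target $a_0$ and sets $\varphi=\psi\otimes s$ for an approximate unit $s\in B$. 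The Cuntz inequality is then verified by a short chain of estimates.

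Your plan, as written, has a real gap at the step where you propose $x=e(x_0\otimes 1)e$. The output of Proposition~\ref{prop:tZ.ultrapower} applied to $A$ is an identity $x_0^*ax_0=1-\psi(1)$ modulo $A_0^\perp$, where $a\in A_\omega$ is the element you have ``extracted'' from $c$. But the relationship between $a$ and $c$ is at best a Cuntz subequivalence (or something like $a\otimes b'\precsim c$ via the Slice Lemma), not an algebraic identity. Consequently $e(x_0^*\otimes 1)\,e\,c\,e\,(x_0\otimes 1)e$ has no reason to equal $e((x_0^*ax_0)\otimes 1)e$ or anything of the sort; you cannot simply tensor up $x_0$ and compress. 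To repair this you would need a separate witness $y\in(A\otimes B)_\omega$ realizing the Cuntz relation between (an elementary tensor under) $a$ and $c$, and then compose. Once you unwind what is required for that---Slice Lemma, the comparison $\langle(f_k^2-\epsilon)_+\rangle\leq m_k\langle b_{0,k}\rangle$ coming from simplicity of $B$, and the divisibility $m_k\langle a_{0,k}\rangle\leq\langle a_{1,k}\rangle$ from non-type-I-ness of $A$---you have reproduced the paper's argument level-by-level, and the ultrapower framework has added bookkeeping rather than saved it. In particular, your outline never invokes simplicity of $B$, which is essential and cannot be absorbed into a generic ``compression argument''.
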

\begin{proof}
Let $\mathfrak F_A\subset A$ and $\mathfrak F_B\subset B$ be finite sets of contractions.
It suffices to check the condition in Definition \ref{non-unital tzs} for sets of the form $\mathfrak F=\{ a\otimes b\mid a\in\mathfrak F_A,\ b\in \mathfrak F_B\}$.
Let $g\in A\otimes B$ be a non-zero positive element, which will play the role of the element $a$ in Definition \ref{non-unital tzs}.
By Kirchberg's Slice Lemma \cite[Lemma 4.1.9]{Rordam}, we can find a pair of non-zero positive elements $a_1\in A$ and $b_0\in B$ with $a_1\otimes b_0\precsim g$.
Appealing to Lemma \ref{new char}, it suffices to check the condition in Definition \ref{non-unital tzs} for elementary tensors of norm one positive contractions in place of arbitrary elements $b$ as stated there.

Let $0<\epsilon<1$ and $n\in\N$ be given.
Let $e\in A$ and $f\in B$ be any pair of positive contractions of norm one. 
Since $B$ is simple, we can find some natural number $k\in\N$ with 
\begin{equation} \label{eq:tensors:1}
\langle (f^2 -\epsilon)_+ \rangle \leq k\langle b_0\rangle.
\end{equation}
Appealing to \cite[Lemma 2.3]{HO}\footnote{We note that despite unitality being an assumption there, all that is needed in the proof is that hereditary subalgebras of $A$ are not type I, in order to appeal to Glimm's theorem via \cite[Proposition 4.10]{KR00}. This is automatic here.},
we find a non-zero positive contraction $a_0\in A$ with $k\langle a_0\rangle \leq \langle a_1\rangle$.

Now use that $A$ is tracially $\Z$-stable and choose a c.p.c.\ order zero map $\psi: M_n(\mathbb C)\to A$ satisfying
\begin{equation} \label{eq:tensors:2}
(e(1_{\tilde{A}}-\psi(1))e-\epsilon)_+\precsim a_0 \quad\text{and}\quad \|[\psi,\mathfrak F_A]\|<\epsilon.
\end{equation}
Let $s\in B$ be a positive contraction that satisfies $sx \approx_\epsilon x \approx_\epsilon xs$ for all $x \in \mathfrak{F}_B \cup \{f\}$.
Then $\varphi=\psi\otimes s: M_n(\mathbb C)\to A\otimes B$ is another c.p.c.\ order zero map that clearly satisfies $\|[\varphi,\mathfrak F]\|<3\epsilon$.
Then
\[
\begin{array}{cll}
(e\otimes f)(1-\varphi(1))(e\otimes f) &\approx_\epsilon& (e\otimes f)(1_{\tilde{A}}\otimes s-\varphi(1))(e\otimes f) \\
&=& e(1-\psi(1))e\otimes fsf \\
&\approx_\epsilon& e(1-\psi(1))e\otimes f^2
\end{array}
\]
and hence we observe (appealing again to \cite[Lemma 2.2]{KR02}) that
\[
\renewcommand\arraystretch{1.3}
\begin{array}{cl}
\multicolumn{2}{l}{ \big\langle \big( (e\otimes f)(1-\varphi(1))(e\otimes f) - 4\epsilon \big)_+ \big\rangle } \\
\leq & \big\langle \big( e(1-\psi(1))e\otimes f^2 - 2\epsilon \big)_+ \big\rangle \\
\leq& \big\langle \big( e(1-\psi(1))e - \epsilon\big)_+ \otimes (f^2-\epsilon)_+ \big\rangle \\
\stackrel{\eqref{eq:tensors:1},\eqref{eq:tensors:2}}{\leq}& k\langle a_0 \otimes b_0\rangle \ \leq \ \langle a_1\otimes b_0\rangle \ \leq \ \langle g\rangle.
\end{array}
\]
This verifies the condition in Lemma \ref{new char} for $A\otimes B$.
\end{proof}

A straightforward consequence of Propositions \ref{prop:hereditary} and \ref{prop:stabilisation} is: 

\begin{corollary}[see also {\cite[Proposition 4.11]{AGJP}}]\label{stable iso}
	Let $A$ be a simple $\C$-algebra.
	Then $A$ is tracially $\Z$-stable if and only if $A\otimes \K$ is tracially $\Z$-stable.
\end{corollary}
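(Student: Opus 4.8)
The plan is to derive Corollary \ref{stable iso} directly from the two preceding propositions together with Remark \ref{Mcduff stable iso}'s underlying principle that $A$ sits as a (full, hereditary) corner of $A\otimes\K$.

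First I would prove the forward direction. Suppose $A$ is tracially $\Z$-stable. Since $\K$ is a simple $\C$-algebra, Proposition \ref{prop:stabilisation} applied to the minimal tensor product $A\otimes\K$ immediately yields that $A\otimes\K$ is tracially $\Z$-stable. (One should note in passing that $A\otimes\K$ is simple, being the minimal tensor product of two simple $\C$-algebras, so the conclusion of Proposition \ref{prop:stabilisation} is meaningful.)

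For the reverse direction, suppose $A\otimes\K$ is tracially $\Z$-stable. Fix a rank-one projection $e_{11}\in\K$ and consider the hereditary $\C$-subalgebra $A\otimes e_{11}\K e_{11}=A\otimes e_{11}\cong A$ of $A\otimes\K$. By Proposition \ref{prop:hereditary}, this hereditary subalgebra is tracially $\Z$-stable, hence so is $A$. The only minor point to check is that $A\otimes e_{11}$ is genuinely a hereditary subalgebra of $A\otimes\K$, which is standard: $e_{11}\K e_{11}=\mathbb C e_{11}$ is hereditary in $\K$, and tensoring a hereditary subalgebra with all of $A$ produces a hereditary subalgebra of $A\otimes\K$ (equivalently, $A\otimes e_{11}=(\mathbf{1}\otimes e_{11})(A\otimes\K)(\mathbf{1}\otimes e_{11})$ in the multiplier algebra).

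There is essentially no obstacle here; the whole content has been front-loaded into Propositions \ref{prop:hereditary} and \ref{prop:stabilisation}, and the corollary is just the observation that $A$ and $A\otimes\K$ are each, up to isomorphism, both a hereditary subalgebra and a tensor-factored-off piece of the other. The one item demanding the slightest care is making sure the hypotheses of the two propositions are met (simplicity of $A\otimes\K$ and of the corner $A$, and that $A\neq\mathbb C$ is not needed as an extra assumption since it is subsumed in ``tracially $\Z$-stable''), all of which are routine.
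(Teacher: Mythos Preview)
Your argument is correct and matches the paper's approach exactly: the paper states that the corollary is a straightforward consequence of Propositions \ref{prop:hereditary} and \ref{prop:stabilisation}, and you have spelled out precisely this---using Proposition \ref{prop:stabilisation} with $B=\K$ for the forward direction and Proposition \ref{prop:hereditary} applied to the hereditary corner $A\otimes e_{11}\cong A$ for the reverse.
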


\section{Strict comparison}

We move now to show that simple tracially $\Z$-stable $\C$-algebras have almost unperforated Cuntz semigroups.
For this we will mimic the original approach used by Hirshberg--Orovitz in \cite[Lemma 3.2]{HO}. The results appearing in this section were announced some years ago by the authors of \cite{AGJP}. 

\begin{lemma}[{cf.\ \cite[Lemma 3.2]{HO}}]\label{lem:strict.comp}
	Let $A$ be a simple tracially $\mathcal{Z}$-stable $\C$-algebra. 
	Let $a,b\in A_+$ and suppose that $0$ is an accumulation point of $\sigma(b)$.
	If $k \lr a \leq k \lr b$ in ${\rm Cu}(A)$ for some $k\in \N$, then $a\precsim b$.
\end{lemma}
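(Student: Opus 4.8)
The plan is to exploit the "tracial division" built into the hypothesis $k\langle a\rangle\le k\langle b\rangle$ together with a c.p.c.\ order zero map from $M_k(\mathbb C)$ provided by tracial $\Z$-stability, in the spirit of Hirshberg--Orovitz \cite[Lemma 4.2]{HO}. First I would reduce to showing $(a-\eta)_+\precsim b$ for an arbitrary $\eta>0$, and I would use that $0$ is an accumulation point of $\sigma(b)$ to extract, for any $\delta>0$, a nonzero positive element $b_0\in\overline{bAb}$ with $b_0\precsim (b-\delta)_+$ and $\|b_0\|$ as small as we like; this $b_0$ will absorb the small error terms that the tracial comparison cannot see. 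The key point is that from $k\langle a\rangle\le k\langle b\rangle$ one does \emph{not} directly get $\langle a\rangle\le\langle b\rangle$, but one does get, after passing to $(a-\eta)_+$ and using Rørdam's lemma on finitely many summands, a way to compare $k$ copies of a cut-down of $a$ with $k$ orthogonal copies of cut-downs of $b$ inside $A\otimes\mathbb K$.

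Next I would invoke tracial $\Z$-stability: applied with the finite set $\mathfrak F$ consisting of (a finite approximant of) elements needed to control $a$ and $b$, with the designated positive element playing the role of the "small" element being $b_0$ above, and with dimension parameter $n=k$, we obtain a c.p.c.\ order zero map $\varphi:M_k(\mathbb C)\to A$ with $\|[\varphi,\mathfrak F]\|<\epsilon$ and $(b'(1_{\tilde A}-\varphi(1))b'-\epsilon)_+\precsim b_0$ for a suitable positive contraction $b'$ (chosen via Lemma \ref{new char} from an approximate unit, with $b'$ acting almost like a unit on $a$ and $b$). Writing $\varphi(e_{11})=:p$-like element and using that $\varphi(1)=\sum_i\varphi(e_{ii})$ with the $\varphi(e_{ii})$ mutually orthogonal and all Cuntz-equivalent, I get: the "bulk" $\varphi(1)$-part of $a$ is dominated by $k$ orthogonal copies of $\varphi(e_{11})^{1/2}a\varphi(e_{11})^{1/2}$ (up to $\epsilon$), hence by one copy of the full $\langle\varphi(e_{11})^{1/2}(\bigoplus^k a)\varphi(e_{11})^{1/2}\rangle$, which via the comparison $k\langle a\rangle\le k\langle b\rangle$ feeds into $\langle\varphi(e_{11})\,\text{(stuff)}\rangle\le\langle b\rangle$ — the order zero map lets one "divide by $k$". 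Meanwhile the complementary $(1-\varphi(1))$-part of $a$ is dominated (up to $\epsilon$) by $b'(1-\varphi(1))b'$ cut down, hence by $b_0\precsim(b-\delta)_+\precsim b$. Adding the two pieces and using that the two halves of $b$ used are orthogonal (sitting in $\overline{bAb}$) gives $(a-\eta)_+\precsim b$.

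The routine parts are the $\epsilon$-$\delta$ bookkeeping with \cite[Lemma 2.2]{KR02} and the standard manipulations $(x+y-\epsilon)_+\precsim (x-\epsilon/2)_+\oplus(y-\epsilon/2)_+$. The main obstacle I anticipate is making the "divide by $k$" step precise: one must ensure that the $k$ orthogonal copies of $a$ compared against $k$ copies of $b$ can be re-assembled so that, after cutting down by the order zero map $\varphi$, the target genuinely lands inside $\overline{bAb}$ (not just in $A\otimes\mathbb K$), and that the piece coming from $1-\varphi(1)$ and the piece coming from $\varphi(1)$ can be made orthogonal to each other inside $\overline{bAb}$ — this is exactly where the accumulation-point hypothesis on $\sigma(b)$ is used, to split $b$ into two orthogonal Cuntz-large pieces $b_0\perp b_1$ with $b_1\precsim b$ still comparing to $a$. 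I would handle this by working in $A\otimes M_k$ (or $A\otimes\mathbb K$) throughout, only at the end using simplicity and the Cuntz relation to push the final estimate back into $\overline{bAb}\subseteq A$.
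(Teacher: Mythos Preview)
Your outline follows the same strategy as the paper (which in turn adapts \cite[Lemma~3.2]{HO}): split $(a-\epsilon)_+$ into a ``$\varphi(1)$-part'' controlled by the matrix comparison $k\langle a\rangle\le k\langle b\rangle$, and a ``$(1-\varphi(1))$-part'' absorbed by a spectrally small piece of $b$, then recombine inside $\overline{bAb}$. Two points deserve correction or sharpening.

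First, your description of the auxiliary element $b_0$ is off. You want $b_0=f(b)$ with $\operatorname{supp} f\subset(0,\delta/2)$, so that $b_0$ is \emph{orthogonal} to $(b-\delta)_+$ (not $\precsim(b-\delta)_+$), and there is no reason to make $\|b_0\|$ small---in fact the paper takes $\|d\|=1$. The role of the accumulation-point hypothesis is exactly to guarantee that such a $b_0$ is non-zero; what matters is $b_0\perp(b-\delta)_+$ and $b_0+(b-\delta)_+\precsim b$.

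Second, regarding the ``divide by $k$'' step you flag as the main obstacle: the paper does not work in $A\otimes M_k$ and then push back. Instead, one first fixes $c=(c_{ij})\in M_k(A)$ with $c((b-\delta)_+\otimes 1_k)c^*=(a-\epsilon)_+\otimes 1_k$, puts the entries $c_{ij}$ (and their products with $(b-\delta)_+$) into the finite set $\mathfrak F$, and then builds a single element
\[
\hat c=\sum_{i,j}\varphi^{1/2}(1)\,g(\varphi)(e_{ij})\,c_{ij}\in A
\]
so that $\hat c(b-\delta)_+\hat c^*\approx\varphi(1)(a-\epsilon)_+$ \emph{inside $A$}. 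The order-zero relations $g(\varphi)(e_{ij})g(\varphi)(e_{sr})=\delta_{js}\,g(\varphi)(1)g(\varphi)(e_{ir})$ perform the collapse of the $k\times k$ sum, exactly the ``matrix-unit'' role you allude to. Your alternative route via $\varphi(e_{11})^{1/2}(\bigoplus^k a)\varphi(e_{11})^{1/2}$ can be made to work, but only after you commute $\varphi(e_{11})^{1/2}\otimes 1_k$ past $c$, for which the $c_{ij}$ must already sit in $\mathfrak F$---at which point you are essentially reproducing the $\hat c$ computation.
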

\begin{proof}
	We will proceed as the original proof of \cite[Lemma 3.2]{HO} with some modifications. 
	Since we have shown that hereditary subalgebras of $A$ are tracially $\Z$-stable, we may assume without loss of generality that $A$ is $\sigma$-unital.
	Let us fix $\epsilon>0$. Without loss of generality, we may assume that $a$ and $b$ have norm equal to $1$. Let $c=(c_{ij}) \in M_k(A)$ and $\delta>0$ such that $c((b-\delta)_+ \otimes 1_k) c^* = (a-\epsilon)_+ \otimes 1_k$.
	
	Let $f \in C_0(0,1]$ be a non-negative function of norm equal to $1$ such that its support is contained in $(0,\delta/2)$. Set $d := f(b)$ which is not zero since $0$ is an accumulation point of $\sigma(b)$. 
	
	We fix $\mu >0$. As in \cite[Lemma 3.2]{HO}, we can assume $c_{ij}d=0$ and hence
	\begin{equation}\label{tlem.eq4}
	\sum_{r=1}^{k} c_{ir} (b-\delta)_+ c_{jr}^* = 
	\begin{cases}
	(a-\epsilon)_+   & i=j, 	\\
	0 & i \neq j.
	\end{cases}
	\end{equation}
	Let $g \in C_0(0,1]$ such that $g|_{[\frac{\mu}{7}, 1]} = 1$ and $g(t)=\sqrt{7t/\mu}$ for $t\in (0,\frac{\mu}{7}]$. Let $h \in C_0(0,1]$ be given by $h(t)=1-\sqrt{1-t}$. Thus
	\begin{equation}\label{tlem.eq5}
	|g(t)^2t-t| < \frac{\mu}{15}, \qquad 1-h(t) = \sqrt{1-t}.
	\end{equation}
	
	Set $\mathfrak{F} = \{(a-\epsilon)_+, (b-\delta)_+, (a-\varepsilon)^{1/2}_+\}\cup\{c_{ij}, c_{ij}(b-\delta)_+ c_{rs}^* \}_{i,j,r,s}$.
	Since we assumed $A$ is $\sigma$-unital, let $(f_n)$ be a sequential increasing approximate unit of $A$ satisfying $f_{n+1}f_n=f_n$ for all $n\geq 1$.  
	Find $n \in \mathbb{N}$ large enough that satisfies
	\begin{equation}\label{tlem.eq11}
	f_n x \approx_{\frac{\mu}{15}} x \approx_{\frac{\mu}{15}} x f_n , \qquad x \in \mathfrak{F}.
	\end{equation}
	Using tracial $\Z$-stability, we find a c.p.c.\ order zero map $\varphi: M_k(\mathbb{C}) \to A$ such that
	\begin{equation} \label{tlem.eq1}
	(f_n (1_{\tilde{A}} - \varphi(1))f_n - \eta)_+ \precsim d, \qquad \|[\varphi,\mathfrak F]\| < \min\left\{ \eta, \frac{\mu}{15}\right\}
	\end{equation}
	where $\eta>0$ is small enough so that
	\begin{equation}
	\label{tlem.eq2}
	\|[g(\varphi), \mathfrak F]\| < {\frac{\mu}{15k^4}},\quad
	\|[\varphi^{1/2}, \mathfrak F]\| < {\frac{\mu}{15k^4}}. %\quad
	%\|[h(\varphi), \mathfrak F]\| < \frac{\mu}{30}.
	\end{equation}
	The existence of such $\eta$ is guaranteed by \cite[Lemma 2.8]{HO}.
	
	Let $m \geq n$ be such that 
	\begin{align}\label{tlem.eq13}
	f_m \varphi(1) &\approx_{\frac{\mu}{15}} \varphi(1).
	\end{align}
	Set
	\begin{align}
	a_1 & := f_m \varphi(1)  (a-\epsilon)_+ f_m, \notag \\
	a_2 & := (a-\epsilon)_+^{1/2} (f_m(1_{\tilde{A}}-\varphi(1))f_m-\eta)_+ (a-\epsilon)_+^{1/2}.
	\end{align}
As $(f_n)$ is an increasing approximate unit and $m\geq n$, it follows from (\ref{tlem.eq11}) that
	\begin{equation}\label{tlem.eq6}
	f_m x \approx_{\frac{\mu}{15}} x, \qquad \text{ for all }x \in \mathfrak{F}.
	\end{equation}
	In particular,
	\begin{align}\label{tlem.eq7}
	a_1 & 
	\overset{\eqref{tlem.eq13}}{\approx}_{\frac{\mu}{15}} \varphi(1)(a-\epsilon)_+ f_m 
	\overset{\eqref{tlem.eq6}}{\approx}_{\frac{\mu}{15}} \varphi(1)(a-\epsilon)_+.
	\end{align}
	Thus
	\begin{align}\label{tlem.eq10}
	a_1 + a_2 &\overset{\eqref{tlem.eq7}}{\approx}_{\frac{2\mu}{15}} \varphi(1) (a-\epsilon)_+ + a_2 \notag \\
	& \approx_{\eta} \varphi(1) (a-\epsilon)_+ + (a-\epsilon)_+^{1/2} f_m (1_{\tilde{A}}-\varphi(1))f_m (a-\epsilon)_+^{1/2}  \notag \\
	& \overset{\eqref{tlem.eq6}}{\approx}_{\frac{2\mu}{15}} \varphi(1) (a-\epsilon)_+ + (a-\epsilon)_+^{1/2} (1_{\tilde{A}}-\varphi(1)) (a-\epsilon)_+^{1/2}  \notag \\
	& \overset{\eqref{tlem.eq1}}{\approx}_{\frac{\mu}{15}} \varphi(1)(a-\epsilon)_+ + (1_{\tilde{A}}-\varphi(1))(a-\epsilon)_+ \notag \\
	& = (a-\epsilon)_+.
	\end{align}
	
	As in the original proof \cite{HO}, we set $g_{ij} := g(\varphi)(e_{ij})$,  $\hat{c}_{ij} := \varphi^{1/2}(1)g_{ij}c_{ij}$ and $\hat{c}:= \sum_{i,j=1}^{k}\hat{c}_{ij}$. 
	Observe that
	\begin{equation}\label{tlem.eq3}
	g_{ij}g_{sr} = 	
	\begin{cases}
	g(\varphi)(1) \cdot g_{ir}  & j=s, \\
	0       & j \neq i.			
	\end{cases}
	\end{equation}
	Then
	\begin{align}\label{tlem.eq8}
	\hat{c} (b-\delta)_+ \hat{c}^* 
	& = \sum_{i,j,r,s=1}^{^k} \varphi^{1/2}(1)g_{ij}c_{ij} (b-\delta)_+ c_{rs}^* g_{sr} \varphi^{1/2}(1) \notag \\
	& \overset{\eqref{tlem.eq2}}{\approx}_{\frac{\mu}{15}} \varphi(1) \sum_{i,j,r,s=1}^{^k} g_{ij}c_{ij} (b-\delta)_+ c_{rs}^* g_{sr}  \notag \\
	& \overset{\eqref{tlem.eq2}}{\approx}_{\frac{\mu}{15}} \varphi(1) \sum_{i,j,r,s=1}^{^k} g_{ij}g_{sr}c_{ij} (b-\delta)_+ c_{rs}^*  \notag \\
	& \overset{\eqref{tlem.eq3}}{=} \varphi(1) \cdot g(\varphi)(1) \sum_{i,j,r=1}^{^k} g_{ir}c_{ij} (b-\delta)_+ c_{rj}^* \notag \\
	& \overset{\eqref{tlem.eq4}}{=} \varphi(1) \cdot g(\varphi)(1) \sum_{i=1}^{^k} g_{ii} \left( \sum_{j=1}^{k}c_{ij} (b-\delta)_+ c_{ij}^*\right) \notag \\
	& \overset{\eqref{tlem.eq4}}{=} \varphi(1) \cdot g(\varphi)(1) \cdot g(\varphi)(1) \cdot (a-\epsilon)_+ \notag \\
	& \overset{\eqref{tlem.eq5}}{\approx}_{\frac{\mu}{15}} \varphi(1) (a-\epsilon)_+ \notag \\
	& \overset{\eqref{tlem.eq7}}{\approx}_{\frac{2\mu}{15}} a_1.
	\end{align}
	On the other hand, 
	\begin{align}\label{tlem.eq14}
	a_2 & = (a-\varepsilon)_+^{1/2}( f_m(1_{\tilde{A}}-\varphi(1))f_m -\eta)_+ (a-\varepsilon)_+^{1/2} \notag \\
	& \overset{\eqref{tlem.eq11}}{\approx}_{\frac{2\mu}{15}} (a-\varepsilon)_+^{1/2} f_n ( f_m(1_{\tilde{A}}-\varphi(1))f_m -\eta)_+ f_n (a-\eta)_+^{1/2} \notag \\
	& \approx_{\eta} (a-\varepsilon)_+^{1/2} f_n f_m(1_{\tilde{A}}-\varphi(1))f_m  f_n (a-\varepsilon)_+^{1/2} \notag \\
	& = (a-\varepsilon)_+^{1/2} f_n (1_{\tilde{A}}-\varphi(1)) f_n (a-\varepsilon)_+^{1/2} \notag \\
	& \approx_{\eta} (a-\varepsilon)_+^{1/2}( f_n(1_{\tilde{A}}-\varphi(1))f_n -\eta)_+ (a-\varepsilon)_+^{1/2}.
	\end{align}
	Since $a$ is a positive contraction, we obtain 
	\begin{align}
	\left(a_2 - \frac{2\mu}{15} - 2 \eta \right)_+ & \overset{\eqref{tlem.eq14}}{\precsim} (a-\varepsilon)_+^{1/2}( f_n(1_{\tilde{A}}-\varphi(1))f_n -\eta)_+ (a-\varepsilon)_+^{1/2} \notag \\
	& \sim ( f_n(1_{\tilde{A}}-\varphi(1))f_n -\eta)_+^{1/2} (a - \varepsilon)_+ ( f_n(1_{\tilde{A}}-\varphi(1))f_n -\eta)_+^{1/2} \notag \\
	& \precsim ( f_n(1_{\tilde{A}}-\varphi(1))f_n -\eta)_+ \notag \\
	& \overset{\eqref{tlem.eq1}}{\precsim} d.
	\end{align}
	Thus there is $s \in A$ such that 
	\begin{equation}\label{tlem.eq9} 
	sds^* \approx_{\frac{\mu}{15}} \left(a_2 - \frac{2\mu}{15}- 2 \eta \right)_+ \approx_{\frac{2\mu}{15}+2\eta} a_2.
	\end{equation}
	As before, we can further assume that $s(b-\delta)_+ =0$ and recall that $c_{ij}d =0$. Then
	\begin{align}
	(\hat{c} + s)((b-\delta)_+ + d)(\hat{c} + s)^* & = \hat{c}(b-\delta)_+ \hat{c}^* + sds^* \notag \\
	& \overset{\eqref{tlem.eq8}}{\approx}_{\frac{\mu}{3}} a_1 + sds^* \notag \\
	& \overset{\eqref{tlem.eq9}}{\approx}_{\frac{\mu}{5} + 2\eta} a_1 + a_2 \notag \\
	& \overset{\eqref{tlem.eq10}}{\approx}_{\frac{\mu}{3}+ \eta} (a-\epsilon)_+.	
	\end{align}
	Since $\mu$ and $\eta$ are arbitrary small, we get
	\begin{equation}
	(a - \epsilon)_+ \precsim (b-\delta)_+ + d \precsim b,
	\end{equation}
	where the last part follows from the construction of $d=f(b)$ where $\mathrm{supp}f \subseteq [0,\delta/2]$. Since $\epsilon$ is arbitrary, we conclude $a \precsim b$.
\end{proof}

We thank the referee for suggesting the following direct proof of the main theorem of this section.

\begin{theorem}[{cf.\ \cite[Theorem 6.4 ]{AGJP}}]\label{thm:strictcomparison}
	Let $A$ be a simple $\C$-algebra. If $A$ is tracially $\mathcal{Z}$-stable, then $\mathrm{Cu}(A)$ is almost unperforated (equivalently, $A$ has strict comparison).
\end{theorem}

\begin{proof}
Let us suppose first that $A$ is $\sigma$-unital.
By \cite[Proposition~2.4]{CE}, we know that $A$ is either stably isomorphic to a unital $\C$-algebra or $A$ is stably projectionless. 
If $A$ is unital, then the result follows from \cite[Theorem 3.3]{HO}, as both tracial $\Z$-stability and the almost unperforation of the Cuntz semigroup are preserved under stable isomorphism. 

If $A$ is stably projectionless, then it is stably finite.
By \cite[Proposition 5.3.16]{APT}, every non-zero element of $\mathrm{Cu}(A)$ is either soft or compact.
By \cite[Theorem 3.5]{BC09}, $\langle a \rangle$ is soft if and only if $\{0\}$ is an accumulation point of $\sigma (a)$ (see Definition \ref{def:soft}). 
Let $\langle a \rangle, \langle b \rangle \in \mathrm{Cu}(A)$ be soft elements such that $(k+1)\langle a \rangle \leq k \langle b \rangle$ for some $k \in \mathbb{N}$.
It follows that $(k+1)\langle a \rangle \leq (k+1) \langle b \rangle$ and, by Lemma \ref{lem:strict.comp}, $\langle a \rangle \leq \langle b \rangle$ in $\mathrm{Cu}(A)$.
Hence the soft part of $\mathrm{Cu}(A)$ is almost unperforated.
By \cite[Proposition 2.8]{Thi20}, $\mathrm{Cu}(A)$ is almost unperforated as well.

For the general case, let $a,b \in (A\otimes \K)_+$ such that there is some $k \in \mathbb{N}$ with $(k+1)\langle a \rangle_A \leq k \langle b \rangle_A$ in $\mathrm{Cu}(A)$.
Let $B$ be a $\sigma$-unital hereditary subalgebra of $A$ such that $B\otimes \K$ contains both $a$ and $b$.\footnote{If we write $a=\sum_{j,\ell\in\N} a_{j,\ell}\otimes e_{j,l}$ and $b=\sum_{j,\ell\in\N} b_{j,\ell}\otimes e_{j,l}$, then the hereditary subalgebra generated by $e=\sum_{j,\ell\in\N} 2^{-(j+\ell)}\big( a_{j,\ell}^*a_{j,\ell}+a_{j,\ell}a_{j,\ell}^*+b_{j,\ell}^*b_{j,\ell}+b_{j,\ell}b_{j,\ell}^*\big)$ would do the trick.}
By \cite[Lemma 2.2]{KR00}, $(k+1)\langle a \rangle_B \leq k \langle b \rangle_B$ in $\mathrm{Cu}(B)$. 
By Proposition \ref{prop:hereditary}, $B$ is tracially $\Z$-stable and hence, by the first part of this proof, $\mathrm{Cu}(B)$ is almost unperforated. 
It follows that $\langle a \rangle_B \leq  \langle b \rangle_B$ in $\mathrm{Cu}(B)$, which clearly yields $\langle a \rangle_A \leq  \langle b \rangle_A$.
This shows that $\mathrm{Cu}(A)$ is almost unperforated.
\end{proof}

\section{$\Z$-stability}\label{main.section}

This is the main section of this note.
We aim to show that tracial $\Z$-stability is equivalent to $\Z$-stability in the separable simple nuclear setting.
We begin with the easy part.

\begin{proposition}\label{prop:Z.implies.tZ}
	Let $A$ be a simple $\Z$-stable $\C$-algebra. 
	Then $A$ is tracially $\Z$-stable. 
\end{proposition}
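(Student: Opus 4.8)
The goal is to show that a simple $\Z$-stable $\C$-algebra $A$ (necessarily $A\neq\mathbb C$, since $\mathbb C$ is not $\Z$-stable) satisfies the condition in Definition~\ref{non-unital tzs}. The natural strategy is to verify instead the equivalent ultrapower condition from Proposition~\ref{prop:tZ.ultrapower}: given $n\in\N$, a separable subalgebra $D\subseteq A$, and a positive norm-one element $a\in A_\omega$, produce a c.p.c.\ order zero map $\varphi:M_n(\mathbb C)\to A_\omega\cap D'$ and a contraction $x\in A_\omega$ with $x^*ax-(1_{\tilde A_\omega}-\varphi(1))\in\tilde A_\omega\cap D^\perp$. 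The key input is that $\Z$-stability gives, via Kirchberg's $\varepsilon$-test / the standard reindexation argument, a unital embedding $\Z\hookrightarrow F_\omega(A)$ — or, more precisely, that $A\otimes\Z\cong A$ allows one to assume $A$ absorbs $\Z$ and to find c.p.c.\ order zero maps from matrix algebras into the central sequence algebra with large complements.

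**Key steps.** First I would reduce to the case where $A$ is $\sigma$-unital by Proposition~\ref{prop:hereditary}: it suffices to prove tracial $\Z$-stability for the separable (hence $\sigma$-unital) hereditary subalgebra generated by $D$ together with representing elements of $a$ — actually one can simply work inside $A$ and use that everything in sight is separable. Second, using $A\cong A\otimes\Z$, one identifies $A_\omega\cap D'$ with $(A\otimes\Z)_\omega\cap D'$ and uses the well-known fact that $\Z$ contains, for each $n$, a c.p.c.\ order zero map $M_n(\mathbb C)\to\Z$ whose complement $1-\varphi(1)$ is Cuntz-below any prescribed nonzero positive element (this is essentially the defining feature of $\Z$, or follows from $\Z\cong Z_{n^\infty}$-type building blocks). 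Tensoring with $1_A$ and composing with the embedding $1_A\otimes(-)$ into the central sequence algebra yields a c.p.c.\ order zero map $\psi:M_n(\mathbb C)\to A_\omega\cap A'\subseteq A_\omega\cap D'$ such that $1_{\tilde A_\omega}-\psi(1)$ is Cuntz-small; more usefully, since $\Z$ is projectionless and strongly self-absorbing, one gets that the positive element $1-\psi(1)\in A_\omega$ is such that $1-\psi(1)\precsim$ any nonzero positive element, and in particular there is a contraction $x\in A_\omega$ with $x^*ax$ equal to $1_{\tilde A_\omega}-\psi(1)$ modulo $D^\perp$ (here one uses that $a$ is full in a suitable corner and strict comparison inside $\Z$, or directly the Cuntz picture). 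Third, one checks the commutation: since $\psi$ factors through $A_\omega\cap A'$, it automatically commutes with $D$, so $\varphi:=\psi$ works, and the contraction $x$ is obtained by the stable rank one / Cuntz-equivalence-implies-unitary-equivalence-of-supports argument inside the (stably $\Z$-stable) ultrapower, or simply by recalling that Cuntz subequivalence $e\precsim a$ with $e$ a fixed element already yields a contraction $x$ with $x^*ax$ arbitrarily close to $e$, and then a reindexation/$\varepsilon$-test cleans this up to an exact equality modulo $D^\perp$.

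**Main obstacle.** The genuinely delicate point is arranging the \emph{exact} identity $x^*ax-(1_{\tilde A_\omega}-\varphi(1))\in\tilde A_\omega\cap D^\perp$ with a single contraction $x$, rather than merely an approximate Cuntz comparison: one must combine the order zero map coming from $\Z$ with a Cuntz-comparison argument for the arbitrary norm-one positive element $a$, and then promote approximate statements to exact ones in the ultrapower via the $\varepsilon$-test, all while keeping $\varphi$ landing in the relative commutant of $D$. I expect the cleanest route is to first prove the statement with $a$ replaced by an element of the form $e(1-\psi(1))e$ for a suitable approximate unit element $e$ absorbing $D$, and then invoke that $\Z$-stability forces strict comparison (Theorem~\ref{thm:strictcomparison} applies once we know tracial $\Z$-stability — but that is circular, so instead one uses directly that $\Z$-stable simple $\C$-algebras have almost unperforated Cuntz semigroup by Rørdam's theorem) to absorb the arbitrary $a$. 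Alternatively, and perhaps more honestly, one can bypass Proposition~\ref{prop:tZ.ultrapower} entirely and verify Definition~\ref{non-unital tzs} directly: given $\mathfrak F,\varepsilon,a,b,n$, use $A\cong A\otimes\Z$ to transport $\mathfrak F$ approximately into $A\otimes 1_{\Z}$, pick an order zero map $M_n(\mathbb C)\to1_A\otimes\Z$ (exactly commuting with $\mathfrak F$ up to $\varepsilon$) whose complement is Cuntz-below a prescribed small element, and then use simplicity of $A$ together with almost unperforation of $\mathrm{Cu}(\Z)$ to arrange $(b(1_{\tilde A}-\varphi(1))b-\varepsilon)_+\precsim a$. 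This direct approach sidesteps the single-$x$ difficulty at the cost of a short Cuntz-semigroup computation in $\mathrm{Cu}(\Z)$.
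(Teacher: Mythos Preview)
Your proposal works much harder than necessary and overlooks the short argument the paper actually gives. The paper's proof is two lines: since $A\cong A\otimes\Z$, Proposition~\ref{prop:stabilisation} (tracial $\Z$-stability passes to minimal tensor products with arbitrary simple $\C$-algebras) reduces the claim to knowing that $\Z$ itself is tracially $\Z$-stable, which is \cite[Proposition~2.2]{HO} in the unital theory. You never invoke Proposition~\ref{prop:stabilisation}, even though it has already been proved and is tailor-made for exactly this step.

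Your longer routes are not entirely wrong in spirit, but they have genuine gaps in the non-unital setting that you wave past. In your first approach, expressions like ``tensoring with $1_A$'' or ``$1_A\otimes(-)$'' are ill-defined when $A$ is non-unital, and the Cuntz comparison you want for $1_{\tilde A_\omega}-\psi(1)$ lives in $\tilde A_\omega$, not $A_\omega$, where ``Cuntz-below any nonzero positive element'' is false as stated (the unit is not small). Your second ``direct'' approach --- transport $\mathfrak F$ into $A\otimes 1_\Z$ and pick an order zero map into $1_A\otimes\Z$ --- again presupposes a unit in $A$; making this precise requires choosing an approximate unit and controlling the resulting error terms in the Cuntz comparison $(b(1-\varphi(1))b-\varepsilon)_+\precsim a$, which is exactly the content of the proof of Proposition~\ref{prop:stabilisation}. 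So your alternative would, if carried out carefully, essentially reprove that proposition in the special case $B=\Z$.

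In short: recognize that the permanence result (Proposition~\ref{prop:stabilisation}) plus the unital case for $\Z$ already does all the work.
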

\begin{proof}
	By assumption we have $A\cong A\otimes\Z$, so by Proposition \ref{prop:stabilisation} it suffices to know that $\Z$ is itself tracially $\Z$-stable.
	But this is well-known, see for example \cite[Proposition 2.2]{HO}. 
\end{proof}

The notion of property (SI), introduced by Matui and Sato in \cite{MS12}, has been fundamental in many recent developments in the structure and classification of simple nuclear $\C$-algebras. 
It was originally introduced for simple unital $\C$-algebras in \cite[Definition 4.1]{MS12} and has been recently revised by the third named author in \cite{Szabo19}, in which a general framework was developed to cover the class of all simple separable nuclear $\C$-algebras.
Let us record some of the ingredients needed in this note.

\begin{definition}[{\cite[Definition 2.5]{Szabo19}}]
	Let $A$ be a separable simple $\C$-algebra. 
	\begin{enumerate}[(i)]
		\item A positive contraction $f \in F_\omega(A)$ is called \emph{tracially supported at 1}, if one of the following is true: $A$ is traceless and $\|fa\|= \|a\|$ for all $a \in A_+$; or $T^+(A)\neq\{0,\infty\}$ and for all non-zero positive $a \in \mathrm{Ped}(A)$ there exists a constant $\kappa_{f,a}>0$ such that
		\begin{equation}
		\inf_{m \geq 1} \tau_a (f^m) \geq \kappa_{f,a} \tau(a) \notag
		\end{equation} 
		for all $\tau \in T_\omega^+ (A)$ with $\tau|_A$ non-trivial.
		
		\item A positive element $e \in F_\omega(A)$ is called \emph{tracially null} if $\tau_a(e) = 0$ for all  non-zero positive $a \in \mathrm{Ped}(A)$ and $\tau \in T_\omega^+(A)$ with $\tau(a) < \infty$.
	\end{enumerate}
	Either one of the conditions above holds for all non-zero positive $a\in \mathrm{Ped}(A)$ if it holds for just one such element.
\end{definition}

\begin{definition}[{\cite[Definition 2.7]{Szabo19}}]
	Let $A$ be a separable simple $\C$-algebra. It is said that $A$ has \emph{property (SI)} if whenever $e,f \in F_\omega(A)$ are positive contractions with $f$ tracially supported at 1 and $e$ tracially null, there exists a contraction $s\in F_\omega(A)$ with
	\begin{equation}
	fs = s \quad \text{and} \quad s^*s =e. 
	\end{equation}
\end{definition}

Importantly, it follows from \cite[Corollary 3.10]{Szabo19} that non-elementary separable simple nuclear $\C$-algebras with strict comparison have property (SI). 
For completeness we include a proof of the following folklore result which is well-known to the experts.
The underlying argument has appeared in the literature several times before. 

\begin{proposition}[Matui--Sato]\label{prop:Matui-Sato}
	Let $A$ be a simple, separable and nuclear $\C$-algebra with strict comparison and $T^+(A) \neq \{0,\infty\}$.
	Then $A$ is uniformly McDuff if and only if $A$ is $\Z$-stable. 
\end{proposition}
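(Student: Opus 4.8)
The plan is to establish both implications using the uniform McDuff property as a bridge to property (SI), following the Matui--Sato machinery as developed in \cite{Szabo19}. Note first that the ``if'' direction is essentially immediate: if $A$ is $\Z$-stable, then since $\Z$ contains unital embeddings of prime dimension drop algebras, one finds unital $^*$-homomorphisms $M_n(\mathbb C)\to\Z_\omega\cap\Z'$ for suitable $n$, and composing with a unital embedding $\Z\hookrightarrow \Z\otimes\Z\cong\Z$ one builds, for every $n$, a c.p.c.\ order zero map $M_n(\mathbb C)\to F_\omega(A)$ whose unit is $1_{F_\omega(A)}$ (or at least a sequence of such converging to $1$); such a map is trivially tracially large since $\tau_a\circ\varphi(1)=\tau_a(1)=\tau(a)$. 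Hence $A$ is uniformly McDuff. So the real content is the ``only if'' direction.

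For the forward implication, suppose $A$ is uniformly McDuff. The first step is to produce, for each $n$, a c.p.c.\ order zero map $\varphi\colon M_n(\mathbb C)\to F_\omega(A)$ that is tracially large, and then extract from $\varphi(1_{M_n})$ a positive contraction in $F_\omega(A)$ that is tracially supported at $1$: indeed, since $\tau_a(\varphi(1)^m)\geq \tau_a(\varphi(1))\cdot(\text{something bounded below, using order zero functional calculus and traciality})$ — more precisely, using that $x\mapsto \tau_a(\varphi(x))$ is a multiple of the normalized trace on $M_n$, one gets $\tau_a(\varphi(e_{11})^m)=\tfrac1n\tau(a)$ for all $m$, so a matrix unit image witnesses the tracial support condition with $\kappa=\tfrac1n$. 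The second step is to invoke the fact recorded just before the statement — that non-elementary separable simple nuclear $\C$-algebras with strict comparison have property (SI) \cite[Corollary 3.10]{Szabo19} — which applies because tracial $\Z$-stability is not assumed here but strict comparison is needed; so actually one must be slightly careful: Proposition~\ref{prop:Matui-Sato} as stated \emph{assumes} strict comparison, so property (SI) is available directly. The third and decisive step is the Matui--Sato argument: combining a tracially large order zero map $M_n(\mathbb C)\to F_\omega(A)$ with property (SI) to upgrade it to a \emph{unital} order zero map, i.e.\ to find for each $n$ a c.p.c.\ order zero map $\psi\colon M_n(\mathbb C)\to F_\omega(A)$ with $\psi(1)=1_{F_\omega(A)}$. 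Concretely, given the tracially large $\varphi$, the element $e=1-\varphi(1)$ (or an appropriate functional-calculus modification) turns out to be tracially null because $\tau_a(1-\varphi(1))=\tau(a)-\tau(a)=0$, while $f=\varphi(1)$ is tracially supported at $1$; property (SI) then yields a contraction $s$ with $fs=s$ and $s^*s=e$, and one assembles $\psi$ out of $\varphi$ and $s$ in the standard way (this is exactly the ``gluing'' step that makes an order zero map unital). Once one has unital order zero maps $M_n(\mathbb C)\to F_\omega(A)$ for all $n$, a theorem of R{\o}rdam--Winter / the Toms--Winter-type characterization gives $A\cong A\otimes\Z$; in the non-unital $\sigma$-unital separable setting this is the version available in \cite{Szabo19} (or one reduces to the unital/stably-unital case via Corollary~\ref{stable iso} and the structure dichotomy of \cite{CE}).

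I expect the main obstacle to be the bookkeeping in the third step: verifying that the element extracted from a tracially large order zero map is genuinely tracially supported at $1$ \emph{uniformly over all generalised limit traces} in $T^+_\omega(A)$ (not just limit traces), and dually that the complementary element is tracially null in the sense of the definition — this requires Lemma~\ref{lem:generalised.vs.limit} or the finiteness reductions around $\mathrm{Ped}(A)$ to make sure the trace identities $\tau_a\circ\varphi(1)=\tau(a)$ propagate correctly, and it requires knowing that the order zero gluing producing $\psi$ from $\varphi$ and $s$ lands in the corrected central sequence algebra $F_\omega(A)$ rather than merely $A_\omega\cap A'$. A secondary subtlety is the final passage from ``unital order zero maps $M_n\to F_\omega(A)$ for all $n$'' to $\Z$-stability: in the non-unital case one should either cite the appropriate statement from \cite{Szabo19} directly or first reduce to a stably isomorphic algebra with compact tracial state space using \cite[Proposition~2.4]{CE} and Corollary~\ref{stable iso}, then apply the classical unital result and transport $\Z$-stability back along the stable isomorphism.
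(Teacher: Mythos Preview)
Your proposal is correct and follows essentially the same route as the paper: invoke property (SI) via \cite[Corollary 3.10]{Szabo19}, verify that $e=1_{F_\omega(A)}-\varphi(1)$ is tracially null and that an image of $\varphi$ is tracially supported at $1$, apply (SI) to obtain $s$, and glue. One small correction: the element $f$ to which (SI) is applied must be $\varphi(e_{11})$, not $\varphi(1)$ (you say both at different points); the relations $\varphi(e_{11})s=s$ and $s^*s=1-\varphi(1)$ are exactly what \cite[Proposition 5.1(iii)]{RW10} requires to produce a unital $^*$-homomorphism $Z_{n,n+1}\to F_\omega(A)$, after which the paper concludes $\Z$-stability directly from \cite[Proposition 5.1]{Na13}---so none of the reductions to compact tracial state space or the worries about generalised limit traces in your final paragraph are actually needed.
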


\begin{proof}
The ``if'' part is clear by Remark \ref{rem:Z-is-McDuff}, so we proceed to prove the ``only if'' part.
	
	It follows from \cite[Corollary 3.10]{Szabo19} that $A$ has property (SI).
	For each $n \in \mathbb{N}$, there is a tracially large order zero map $\varphi: M_n(\mathbb{C}) \to F_\omega(A)$. Set $e:= 1_{F_\omega(A)} - \varphi(1)\in F_\omega(A)$ and $f:=\varphi(e_{11})\in F_\omega(A)$. 
	The fact that $\varphi$ is tracially large means precisely that $e$ is tracially null.
	Let us check that $f$ is tracially supported at 1.
	
	Since the tracially null elements form an ideal and $\varphi(1)$ agrees with the unit of $F_\omega(A)$ modulo this ideal, we also get that $1_{F_\omega(A)}-\varphi(1)^m$ is tracially null for any $m\geq 1$. This shows that $\left(1_{F_\omega(A)}-\varphi(1)^m\right)f$ is tracially null and
	\begin{equation}
		\tau_a(f) = \tau_a(\varphi(1)^m f), \qquad m \in \mathbb{N}.
	\end{equation}
	Recall that by the structure theorem of order zero maps, we have $\varphi(e_{11})^m = \varphi(1)^{m-1} \varphi(e_{11})$ (see Section \ref{section:orderzero}). Thus
	\begin{equation}
	\tau_a (f^m) = \tau_a(\varphi(e_{11})^m)=\tau_a(\varphi(1)^{m-1}\varphi(e_{11}))=\tau_a (f) %= \frac{1}{n} \tau(a)
	\end{equation}
	for all non-zero positive $a \in \mathrm{Ped}(A)$ and $\tau \in T^+_\omega(A)$ with $\tau(a) < \infty$.
	Observe that, by uniqueness of the tracial state $\mathrm{tr}$ on $M_n(\mathbb C)$, $\tau_a\circ\varphi$ must be a multiple of $\mathrm{tr}$. Then
	\begin{align}
		\tau_a (f^m) = \tau_a (f) = \mathrm{tr}(e_{11}) \tau_a (\varphi(1))  = \frac{1}{n}\tau(a)
	\end{align} 
	for all non-zero positive $a \in \mathrm{Ped}(A)$ and $\tau \in T^+_\omega(A)$ with $\tau(a) < \infty$.
	It follows that $f$ is tracially supported at $1$.
	
	By property (SI), there exists $s \in F_\omega(A)$ such that $fs=s$ and $s^*s = e$. By \cite[Proposition 5.1(iii)]{RW10}, there is a unital $^*$-homomorphism from the dimension drop algebra\footnote{Given $n,m \in \mathbb{N}$, the dimension drop algebra $Z_{n,m}$ is defined as $
	Z_{n,m} := \{ f \in C([0,1], M_n(\mathbb{C})\otimes M_m(\mathbb{C}))\mid f(0) \in M_n(\mathbb{C}) \otimes 1  , f(1) \in 1\otimes M_m(\mathbb{C})\}.
$} $Z_{n,n+1}$ into $F_\omega(A)$ for all $n \in \mathbb{N}$. By \cite[Proposition 5.1]{Na13}, we conclude that $A$ is $\Z$-stable.
\end{proof}

Next we will show that tracial $\Z$-stability implies the uniform McDuff property.
Let us prove a preliminary lemma first.

\begin{lemma}[{cf.\ \cite[Lemma 4.3]{HO}}]\label{lem:small.c}
	Let $A$ be a simple non-elementary $\C$-algebra with $T^+(A)\neq\{0,\infty\}$.
	Suppose that $K\subset T^+(A)\setminus\{0,\infty\}$ is a compact subset.
	Then for any $n \in \mathbb{N}$, there exists a positive element $c_n\in A$ of norm one such that $d_\tau(c_n) \leq \frac{1}{n}$ for all $\tau\in K$.
\end{lemma}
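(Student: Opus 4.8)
The statement asserts the existence, for each $n$, of a norm-one positive element $c_n \in A$ whose rank $d_\tau(c_n)$ is at most $\tfrac1n$ uniformly over all tracial states. The natural strategy is to exploit that $A$ is algebraically simple (so $A = \mathrm{Ped}(A)$ and every nontrivial lower semicontinuous trace is bounded) and non-elementary (so hereditary subalgebras are not of type I and Glimm's theorem applies). The key observation is that $T(A)$ need not be compact in the non-unital setting, so one cannot simply take a maximum of a continuous function; instead one needs a lower bound on the rank of a fixed reference element.

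First I would fix a norm-one positive element $a \in A$. Since $A$ is algebraically simple, $a$ generates $A$ as an ideal, and by \cite[Proposition 2.5]{Ti14} (the same tool used in Lemma~\ref{lem:generalised.vs.limit}) one gets a uniform lower bound $\delta := \inf_{\tau \in T(A)} d_\tau(a) > 0$; alternatively $\inf_{\tau} \tau((a-\epsilon)_+) > 0$ for suitable $\epsilon$, which gives the same effect. Next, using that the hereditary subalgebra $\overline{aAa}$ is non-elementary, hence not type I, Glimm's theorem (via \cite[Proposition 4.10]{KR00}, exactly as invoked in \cite[Lemma 2.3]{HO}) provides, for any prescribed $k$, a collection of $k$ pairwise orthogonal positive norm-one elements $b_1,\dots,b_k$ inside $\overline{(a-\epsilon)_+ A (a-\epsilon)_+}$ that are all Cuntz-equivalent to one another. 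Then each $b_i$ satisfies $k \langle b_i \rangle = \langle b_1 \rangle + \dots + \langle b_k \rangle \le \langle a \rangle$ in $\mathrm{Cu}(A)$, so $d_\tau(b_i) \le \tfrac1k d_\tau(a) \le \tfrac1k \|\tau\| \le \tfrac1k$ for every $\tau \in T(A)$ (normalizing so tracial states have norm $1$). Choosing $k \ge n$ and setting $c_n := b_1$ finishes the argument.

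**Main obstacle.** The only genuinely delicate point is producing the uniform positive lower bound $\inf_{\tau \in T(A)} d_\tau(a) > 0$ — or equivalently ensuring the $k$ orthogonal Cuntz-equivalent elements really sit below $\langle a\rangle$ with a bound that is uniform in $\tau$ — because $T(A)$ is not assumed compact. This is precisely where algebraic simplicity is essential: it forces all traces to be bounded and lets one invoke \cite[Proposition 2.5]{Ti14} in place of a compactness argument. Everything else (the Glimm-theoretic extraction of orthogonal Cuntz-equivalent positive elements, the subadditivity $d_\tau(b_1) + \dots + d_\tau(b_k) = \sum d_\tau(b_i) \le d_\tau(a)$) is standard and mirrors \cite[Lemma 4.3]{HO}, the unital predecessor, where compactness of $T(A)$ did the job now handled by algebraic simplicity.
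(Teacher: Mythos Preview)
Your proposal is correct and follows essentially the same idea as the paper: both use Glimm's theorem via \cite[Proposition~4.10]{KR00} to produce $n$ pairwise orthogonal, mutually Cuntz-equivalent positive norm-one elements (the paper packages this as a c.p.c.\ order zero map $\psi: M_n(\mathbb{C})\to A$ with $c_n=\psi(e_{11})$), and then observe that boundedness of traces yields $n\,d_\tau(c_n)\leq d_{\tilde\tau}(1_{\tilde A})=\|\tau\|=1$.

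However, the detour you flag as the ``main obstacle'' is a red herring. The lower bound $\delta=\inf_{\tau\in T(A)} d_\tau(a)>0$ from \cite[Proposition~2.5]{Ti14} is never actually used in your argument: your chain of inequalities $d_\tau(b_i)\leq \tfrac1k d_\tau(a)\leq \tfrac1k\|\tau\|=\tfrac1k$ only needs the \emph{upper} bound $d_\tau(a)\leq\|\tau\|$, which is automatic once traces are bounded. The sole role of algebraic simplicity here is exactly that --- to guarantee every $\tau\in T^+(A)$ is bounded and hence extends to $\tilde A$ --- and not to furnish any uniform positivity. The paper's proof accordingly dispenses with the reference element $a$ altogether and bounds $d_\tau(\psi(1_{M_n}))\leq d_{\tilde\tau}(1_{\tilde A})$ directly.
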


\begin{proof}
	Let $b$ be a non-zero positive element in $\mathrm{Ped}(A)$.
	Then by compactness, we have $\sup_{\sigma\in K} d_\sigma(b)<\infty$, so let us choose a natural number $k\in\N$ greater than this constant.
	In particular, each trace $\tau\in K$ restricts to a positive tracial functional of norm at most $k$ on the hereditary subalgebra $\overline{bAb}$.
	Using \cite[Proposition~4.10]{KR00} and \cite[Corollary 4.1]{WZ09}, there exists a c.p.c.\ order zero map $\psi: M_{nk}(\mathbb{C}) \to \overline{bAb}$ such that $c_n:= \psi(e_{11})$ is a positive contraction of norm one in $A$. 
Given $\tau\in K$, it follows that
\begin{equation} 
nk\cdot d_\tau(\psi(e_{11}))=d_\tau(\sum_{i=1}^{nk}\psi(e_{ii})) = d_\tau(b)\leq k.
\end{equation}
Therefore $c_n$ satisfies the required property.
\end{proof}

\begin{proposition}\label{pro:tZ.tlarge.maps}
	Let $A$ be a separable simple $\C$-algebra with $T^+(A) \neq \{0,\infty\}$.
	If $A$ is tracially $\Z$-stable, then $A$ is uniformly McDuff. 
\end{proposition}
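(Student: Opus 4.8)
The plan is to follow the approach of Hirshberg--Orovitz \cite{HO}: for each $n\in\N$ we produce the required tracially large c.p.c.\ order zero map into $F_\omega(A)$ as the one induced by a sequence of c.p.c.\ order zero maps $M_n(\mathbb C)\to A$ supplied by tracial $\Z$-stability. First I would reduce to the case that $A$ is algebraically simple: since the uniform McDuff property is invariant under stable isomorphism (Remark \ref{Mcduff stable iso}), while tracial $\Z$-stability passes to hereditary subalgebras and between $A$ and $A\otimes\K$ (Proposition \ref{prop:hereditary} and Corollary \ref{stable iso}), we may replace $A$ by a stably isomorphic algebraically simple $\C$-algebra (cf.\ \cite{Ti14}), which stays separable, simple, non-elementary and tracially $\Z$-stable and still has $T^+(A)\neq\emptyset$. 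Then $\mathrm{Ped}(A)=A$, every non-trivial lower semicontinuous trace on $A$ is bounded, and Lemmas \ref{lem:generalised.vs.limit} and \ref{lem:small.c} are available.

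Fix $n$ and a single nonzero positive contraction $a\in A$; as the definition of tracial largeness is insensitive to the choice of element of $\mathrm{Ped}(A)=A$, it suffices to verify the tracial identity at this $a$. Choose an increasing sequence of finite sets $\mathfrak F_k\subseteq A$ with dense union and $a^{1/2}\in\mathfrak F_1$, together with an increasing approximate unit $(b_k)$ of $A$ with $b_{k+1}b_k=b_k$ and $\|b_kx-x\|,\|xb_k-x\|<1/k$ for all $x\in\mathfrak F_k$. By Lemma \ref{lem:small.c} pick positive elements $c_k\in A$ of norm one with $d_\tau(c_k)\le 1/k$ for all $\tau\in T(A)$. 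Applying tracial $\Z$-stability to the data $(\mathfrak F_k,1/k,c_k,b_k,n)$ produces c.p.c.\ order zero maps $\varphi_k\colon M_n(\mathbb C)\to A$ with $\|[\varphi_k,\mathfrak F_k]\|<1/k$ and $\big(b_k(1_{\tilde A}-\varphi_k(1))b_k-\tfrac1k\big)_+\precsim c_k$. These induce a c.p.c.\ order zero map $\varphi\colon M_n(\mathbb C)\to A_\omega\cap A'$ (landing in the relative commutant since $\|[\varphi_k,\mathfrak F_k]\|\to 0$ and $\bigcup_k\mathfrak F_k$ is dense), and I claim its image in $F_\omega(A)$ is tracially large.

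Let $\tau\in T^+_\omega(A)$ with $0<\tau(a)<\infty$ (the case $\tau(a)=0$ being trivial). By Lemma \ref{lem:generalised.vs.limit}, $\tau=\lambda\tau^{\mathrm{lim}}$ for some $\lambda\in(0,\infty)$ and some limit trace $\tau^{\mathrm{lim}}$ coming from tracial states $(\tau_k)$ on $A$. Representing the unit of $F_\omega(A)$ by the element $e\in A_\omega\cap A'$ induced by $(b_k)$, one has $\tau_a(1_{F_\omega(A)})=\tau(ae)=\tau(a)$, so it suffices to show $\tau_a\big(1_{F_\omega(A)}-\varphi(1)\big)=0$. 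Unwinding definitions and using cyclicity of $\tau_k$, this quantity equals $\lambda\lim_{k\to\omega}\tau_k\big(a^{1/2}b_ka^{1/2}-a^{1/2}\varphi_k(1)a^{1/2}\big)$, which, because $\|b_ka^{1/2}-a^{1/2}\|<1/k$, agrees with $\lambda\lim_{k\to\omega}\tau_k\big(a^{1/2}(1_{\tilde A}-\varphi_k(1))a^{1/2}\big)$. Setting $P_k=\big(b_k(1_{\tilde A}-\varphi_k(1))b_k-\tfrac1k\big)_+$, the inequality $b_k(1_{\tilde A}-\varphi_k(1))b_k\le P_k+\tfrac1k$, conjugated by $a^{1/2}$ and combined once more with $\|b_ka^{1/2}-a^{1/2}\|<1/k$, gives $\tau_k\big(a^{1/2}(1_{\tilde A}-\varphi_k(1))a^{1/2}\big)\le\tau_k(a^{1/2}P_ka^{1/2})+O(1/k)$; and $a^{1/2}P_ka^{1/2}$ is a positive contraction with $a^{1/2}P_ka^{1/2}\precsim P_k\precsim c_k$, so $\tau_k(a^{1/2}P_ka^{1/2})\le d_{\tau_k}(a^{1/2}P_ka^{1/2})\le d_{\tau_k}(c_k)\le 1/k$. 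Hence $\tau_a\big(1_{F_\omega(A)}-\varphi(1)\big)=0$, the induced map is tracially large, and since $n$ was arbitrary $A$ is uniformly McDuff.

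I expect the main obstacle to be the transition from the abstractly defined tracial weight $\tau_a$ on $F_\omega(A)$ to the concrete sequential expression $\lambda\lim_{k\to\omega}\tau_k(\,\cdot\,)$; this is exactly what the reduction to algebraically simple $A$ (hence boundedness of all traces) together with Lemma \ref{lem:generalised.vs.limit} is meant to enable, and it is what lets the Cuntz comparison $P_k\precsim c_k$ be fed in through the rank functions $d_{\tau_k}$. A second point requiring care is that one must use the shrinking elements $c_k$ with $d_\tau(c_k)\le 1/k$, rather than a single element of rank $1/n$, so that the tracial estimate becomes exact in the $\omega$-limit; the remaining computations are routine norm estimates involving the approximate unit $(b_k)$ and the standard monotonicity of rank functions under Cuntz subequivalence.
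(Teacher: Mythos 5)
Your argument is correct, and it rests on the same two pillars as the paper's proof: Lemma \ref{lem:small.c}, producing norm-one positive elements of arbitrarily small rank, and Lemma \ref{lem:generalised.vs.limit}, which lets you write a generalised limit trace that is finite on a fixed element as a multiple of a limit trace. The implementation, however, differs in two ways. First, the paper does not replace $A$ by a stably isomorphic algebraically simple algebra; it keeps $A$, works with the hereditary subalgebra $B=\overline{bAb}$ for $b\in\mathrm{Ped}(A)$ inside $A$, assembles the single element $c=(c_k)\in B_\omega$, and proves once that $\tau(c)=0$ for every $\tau\in T^+_\omega(A)$ with $\tau(b)<\infty$. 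Second, instead of running Definition \ref{non-unital tzs} along representing sequences, the paper invokes its ultrapower reformulation (Proposition \ref{prop:tZ.ultrapower}) with this $c$, so that the exact relation $x^*cx-(1_{\tilde{A}_\omega}-\varphi(1))\in\tilde{A}_\omega\cap A^\perp$ reduces tracial largeness to the one-line estimate $\tau\big(b^{1/2}(1_{\tilde{A}_\omega}-\varphi(1))b^{1/2}\big)\le\|x\|^2\|b\|\,\tau(c)=0$; you instead carry the Cuntz comparison $P_k\precsim c_k$ through the rank functions $d_{\tau_k}$ at each finite stage, using $\sigma(x)\le d_\sigma(x)$ for positive contractions, which is a perfectly valid hands-on substitute at the cost of the extra bookkeeping with the approximate unit $(b_k)$ that the ultrapower route avoids. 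Two minor points of hygiene in your reduction: the stable isomorphism of $A$ with $\overline{bAb}$ requires Brown's theorem for full hereditary subalgebras of separable simple algebras (the citation of \cite{Ti14} only yields algebraic simplicity), and one should note that the hereditary subalgebra again has non-empty $T^+$ and is non-elementary so that Lemmas \ref{lem:generalised.vs.limit} and \ref{lem:small.c} apply; both facts are standard and equally implicit in the paper's own proof, so neither constitutes a gap.
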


\begin{proof}
Pick any non-zero positive element $b\in\mathrm{Ped}(A)$, and define the subset $K\subset T^+(A)\setminus\{0,\infty\}$ as those traces that normalize $b$.
Then $K$ is clearly compact and every non-trivial trace on $A$ is a constant multiple of a trace in $K$.
	Let $c_k \in A$ be the positive elements of norm one given from Lemma \ref{lem:small.c} for $k \in \mathbb{N}$ and let $c \in B_\omega$ be the induced element. 
	We claim that $\tau(c) = 0$ for any $\tau \in T^+_\omega(A)$ with $\tau(b)<\infty$.
	Indeed, clearly $\tau(c)=0$ whenever $\tau$ is induced from a sequence $\tau_n\in K$.
	However, if $\tau(b)<\infty$, then by \cite[Lemma~2.10, Remark~2.11]{Szabo19}, $\tau$ is already a constant multiple of some generalised limit trace induced from a sequence in $K$, so the claim follows.
	
	Let $n\in\N$.
	By Proposition \ref{prop:tZ.ultrapower}, there are a c.p.c.\ order zero map $\varphi: M_n(\mathbb{C}) \to A_\omega \cap A'$ and a contraction $x\in A_\omega$ such that 
	\begin{equation}
		cx=x \quad\text{and}\quad x^*x-(1_{\tilde A_\omega}-\varphi(1_{M_n})) \in \tilde{A}_\omega\cap A^\perp. \label{prop:tlarge.maps.eq}
	\end{equation}
	 
	Let us show that this map induces a tracially large map into $F_\omega(A)$.
	Indeed we have for any $\tau \in T^+_\omega(A)$ with $\tau(b)<\infty$ that
	\[
	\begin{array}{ccl}
		\tau(b^{1/2}(1_{\tilde A_\omega} - \varphi(1_{M_n}))b^{1/2}) 
		&\overset{\eqref{prop:tlarge.maps.eq}}{=}&  \tau( b^{1/2} x^* c x b^{1/2}) \\
		&=& \tau(c^{1/2}xbx^*c^{1/2}) \\
		& \leq & \|x\|^2\|b\|\tau(c) = 0.
	\end{array}
	\]	
	This shows that $1-\varphi(1_{M_n})$ vanishes under the trace $\tau_b$.
	As explained in the footnote at Definition \ref{def:traciallylargemap},  \cite[Proposition 2.4]{Szabo19} yields that $1-\varphi(1_{M_n})$ vanishes under any trace $\tau_a$ with $a\in \mathrm{Ped}(A)$ and $\tau\in T^+_\omega(A)$ satisfying $\tau(a) < \infty$.
	Hence the induced map $\bar \varphi: M_n(\mathbb{C}) \to F_\omega(A)$ is tracially large and $A$ is uniformly McDuff.
\end{proof}

\begin{theorem}\label{main thm}
	Let $A$ be a separable simple nuclear $\C$-algebra.
	If $A$ is tracially $\Z$-stable, then $A$ is $\Z$-stable.
\end{theorem}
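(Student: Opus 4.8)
The plan is to split the argument according to whether $A$ admits a trace. If $T^+(A)=\emptyset$, then $A$ is traceless, and Theorem~\ref{thm:strictcomparison} tells us that $\mathrm{Cu}(A)$ is almost unperforated; since $A$ is also not of type I (as it is tracially $\Z$-stable) this forces $A$ to be purely infinite, and a simple separable nuclear purely infinite $\C$-algebra is $\mathcal O_\infty$-stable, hence $\Z$-stable. (Alternatively one can invoke property (SI) in its traceless incarnation from \cite{Szabo19} directly.) So the substance is in the case $T^+(A)\neq\emptyset$.

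Assume then $T^+(A)\neq\emptyset$. First I would dispose of the elementary case: a simple tracially $\Z$-stable $\C$-algebra is never of type I (this was observed in the remarks following Definition~\ref{non-unital tzs}), so $A$ is non-elementary. Now apply Theorem~\ref{thm:strictcomparison}: since $A$ is nuclear, it is in particular exact, so all $2$-quasitraces on $A$ are traces (Haagerup, or \cite{BK04}), and therefore $A$ has strict comparison. Next apply Proposition~\ref{pro:tZ.tlarge.maps}: tracial $\Z$-stability implies that $A$ is uniformly McDuff. We now have a separable simple nuclear non-elementary $\C$-algebra with $T^+(A)\neq\emptyset$, strict comparison, and the uniform McDuff property. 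Feeding this into Proposition~\ref{prop:Matui-Sato} yields that $A$ is $\Z$-stable, which completes the proof.

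The main obstacle, such as it is, is purely organizational: making sure the traceless case is handled cleanly and that the hypotheses of the cited results line up (e.g.\ that ``nuclear + simple + $T^+(A)\neq\emptyset$'' indeed gives strict comparison via ``all $2$-quasitraces are traces'', and that non-elementarity is in force so that Proposition~\ref{prop:Matui-Sato} and the property (SI) input from \cite[Corollary~3.10]{Szabo19} apply). All the hard analytic work — the Cuntz semigroup estimate behind almost unperforation, the construction of tracially large order zero maps, and the Matui--Sato property (SI) machinery — has already been carried out in Lemma~\ref{lem:strict.comp}, Theorem~\ref{thm:strictcomparison}, Proposition~\ref{pro:tZ.tlarge.maps}, and Proposition~\ref{prop:Matui-Sato}, so the proof of Theorem~\ref{main thm} should be only a few lines assembling these pieces.

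\begin{proof}
If $A=\mathbb C$ or $A$ is elementary, then $A$ is not tracially $\Z$-stable, so we may assume $A$ is non-elementary.
If $T^+(A)=\emptyset$, then by Theorem~\ref{thm:strictcomparison} the Cuntz semigroup $\mathrm{Cu}(A)$ is almost unperforated; as $A$ is simple, nuclear, separable and not of type I with no traces, it follows that $A$ is purely infinite and hence $\Z$-stable.
So assume $T^+(A)\neq\emptyset$.
Since $A$ is nuclear, it is exact, so every $2$-quasitrace on $A$ is a trace, and Theorem~\ref{thm:strictcomparison} shows that $A$ has strict comparison.
By Proposition~\ref{pro:tZ.tlarge.maps}, $A$ is uniformly McDuff.
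Therefore $A$ is a separable simple nuclear $\C$-algebra with $T^+(A)\neq\emptyset$, strict comparison and the uniform McDuff property, so $A$ is $\Z$-stable by Proposition~\ref{prop:Matui-Sato}.
\end{proof}
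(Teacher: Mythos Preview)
Your proof is correct and follows essentially the same approach as the paper's own proof: both establish almost unperforation/strict comparison via Theorem~\ref{thm:strictcomparison} and Haagerup's theorem, then split into a purely infinite case (handled via $\mathcal O_\infty$-stability) and a tracial case (handled via Proposition~\ref{pro:tZ.tlarge.maps} and Proposition~\ref{prop:Matui-Sato}). The only cosmetic difference is that the paper phrases the dichotomy as ``stably finite versus purely infinite'' (after invoking \cite[Proposition~2.4]{CE}), whereas you phrase it as ``$T^+(A)\neq\emptyset$ versus $T^+(A)=\emptyset$''; since nuclearity makes $2$-quasitraces traces, these dichotomies coincide and the arguments are equivalent.
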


\begin{proof}
By \cite[Proposition~2.4]{CE}, we know that $A$ is either stably projectionless or $A$ is stably isomorphic to a unital $\C$-algebra. 
If $A$ is stably isomorphic to a unital $\C$-algebra, say $B$, it follows by Corollary \ref{stable iso} that $B$ is tracially $\Z$-stable. By \cite[Theorem 4.1]{HO}, it follows that $B$ is $\Z$-stable. 
Since $\Z$-stability is preserved under stable isomorphism, it follows that $A$ is $\Z$-stable.
On the other hand, if $A$ is stably projectionless, then $A$ is stably finite.
Recall that $A$ has strict comparison by Theorem \ref{thm:strictcomparison}.
Thus, Propositions \ref{pro:tZ.tlarge.maps} and \ref{prop:Matui-Sato} yield that $A$ is $\Z$-stable.
\end{proof}

Matui introduced the notion of \emph{almost finiteness} for étale groupoids with a compact and totally disconnected unit space (see \cite[Definition~6.2]{MR2876963} and we also refer the reader to \cite{MR584266} for the theory of étale groupoids). Recently, the notion of strongly almost finiteness was introduced in \cite[Definition~3.12]{ABBL} for étale groupoids $G$ with a (not necessarily compact) totally disconnected unit space. More precisely, $G$ is \emph{strongly almost finite} if the restriction $G|_K$ is almost finite in the sense of Matui for all compact open subsets $K$ of the unit space $G^{(0)}$. So the reduced groupoid $\C$-algebra $C_r^*(G)$ may \emph{not} be unital in general. Moreover, it is known that when the groupoid is also minimal and has a compact unit space, strong almost finiteness agrees with Matui's almost finiteness (see \cite[Proposition 3.6]{ABBL}). It is also worth noting that (strong) almost finiteness does not imply amenability for groupoids nor exactness for groupoid $\C$-algebras (see \cite[Theorem 6]{Elek18} and \cite[Remark~2.10]{MR4133645}). 

\begin{corollary}
Let $G$ be a locally compact Hausdorff minimal $\sigma$-compact étale groupoid with totally disconnected unit space $G^{(0)}$ without isolated points. If $G$ is strongly almost finite, then the reduced $\C$-algebra $C_r^*(G)$ is a $\sigma$-unital simple and tracially $\Z$-stable $\C$-algebra with real rank zero and stable rank one.

If $G$ is also amenable and second-countable, then $C_r^*(G)$ is classifiable by its Elliott invariant and has decomposition rank at most one.
\end{corollary}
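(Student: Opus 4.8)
The plan is to deduce the corollary from a full corner of $C_r^*(G)$ which is the reduced $\C$-algebra of a \emph{minimal} almost finite ample groupoid with a \emph{compact} unit space, a setting in which the unital theory is available. Since $G^{(0)}$ is locally compact, Hausdorff and totally disconnected, I would first fix a non-empty compact open subset $K\subseteq G^{(0)}$. Then $G|_K$ is again Hausdorff and étale, its unit space $K$ is compact, totally disconnected and without isolated points (as $K$ is open in $G^{(0)}$), it is minimal (its orbits are the intersections with $K$ of the dense $G$-orbits), and it is almost finite in the sense of Matui \cite{MR2876963} precisely because $G$ is strongly almost finite \cite{ABBL}. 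Because $G^{(0)}$ is $\sigma$-compact, $C_0(G^{(0)})$ is $\sigma$-unital and sits inside $C_r^*(G)$ as a subalgebra containing an approximate unit, so $C_r^*(G)$ is $\sigma$-unital; and $C_r^*(G)$ is simple by minimality of $G$ (using effectiveness of $G$, which is part of the almost finiteness setting). In particular $1_K$ is a full projection, so $1_KC_r^*(G)1_K=C_r^*(G|_K)$ is a unital, full hereditary subalgebra, and $C_r^*(G|_K)$ is Morita equivalent to $C_r^*(G)$. Real rank zero and stable rank one are part of the structure theory of almost finite ample groupoids (see \cite{ABBL}); one may cite this for $C_r^*(G)$ directly, or establish it for $C_r^*(G|_K)$ and transport it along the Morita equivalence.

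The substantive point in the first part is tracial $\Z$-stability. For this I would invoke that the unital $\C$-algebra $C_r^*(G|_K)$ --- being the reduced $\C$-algebra of a minimal almost finite ample groupoid whose unit space has no isolated points --- is tracially $\Z$-stable in the sense of Hirshberg--Orovitz (a by-now standard consequence of almost finiteness; see \cite{ABBL} and the surrounding literature on Kerr's almost finiteness), and hence is tracially $\Z$-stable in the sense of Definition~\ref{non-unital tzs} by the first remark following it. Since $1_K$ is a full projection and $C_r^*(G|_K)$ is (being unital) $\sigma$-unital, Brown's stable isomorphism theorem yields $C_r^*(G|_K)\otimes\K\cong C_r^*(G)\otimes\K$. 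Two applications of Corollary~\ref{stable iso} then promote tracial $\Z$-stability from $C_r^*(G|_K)$ to $C_r^*(G)$, which completes the first part.

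For the second part, assume in addition that $G$ is amenable and second countable. Then $C_r^*(G)$ is separable and nuclear (amenability of $G$) and satisfies the UCT (Tu's theorem for amenable groupoids), and the same holds for $C_r^*(G|_K)$, with $G|_K$ amenable as well. Being separable, simple, nuclear and tracially $\Z$-stable, $C_r^*(G)$ is $\Z$-stable by Theorem~\ref{main thm}. As $C_r^*(G)$ contains the non-zero projection $1_K$, it is stably isomorphic to the \emph{unital} algebra $C_r^*(G|_K)$, which is separable, simple, unital, nuclear, $\Z$-stable and in the UCT class; by the classification theorem for such algebras \cite{GLN20a,GLN20b,TWW17}, $C_r^*(G|_K)$ --- and hence $C_r^*(G)$ --- is classified by its Elliott invariant. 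Finally, $C_r^*(G|_K)$ has stable rank one and is therefore stably finite, so, being simple and nuclear, it admits a faithful tracial state and is thus quasidiagonal by \cite{TWW17}; together with $\Z$-stability this gives $\mathrm{dr}(C_r^*(G|_K))\le 1$ by \cite{CETWW}, and since decomposition rank is invariant under stable isomorphism we conclude $\mathrm{dr}(C_r^*(G))\le 1$.

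I expect the only genuinely non-trivial ingredient to be the tracial $\Z$-stability input used in the second paragraph --- namely that the unital reduced $\C$-algebra of a minimal almost finite ample groupoid with no isolated points in its unit space is tracially $\Z$-stable in the Hirshberg--Orovitz sense. Once that is granted, everything else reduces to transporting unital facts across the Morita equivalence between $C_r^*(G|_K)$ and $C_r^*(G)$, for which the permanence results of Section~2 (above all Corollary~\ref{stable iso}) together with Brown's theorem do the work, while the remaining inputs --- $\sigma$-unitality, simplicity, real rank zero, stable rank one, nuclearity, the UCT, quasidiagonality, and the classification theorem --- are standard.
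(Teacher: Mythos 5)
Your argument follows the paper's proof essentially verbatim: restrict to a compact open $K\subseteq G^{(0)}$, quote the unital result that $C_r^*(G|_K)$ is simple and tracially $\Z$-stable (with real rank zero and stable rank one), transfer everything to $C_r^*(G)$ via stable isomorphism and Corollary~\ref{stable iso}, and in the amenable second-countable case combine Theorem~\ref{main thm} with Tu's theorem; the differences are cosmetic (Brown's theorem on the full corner instead of the Carlsen--Ruiz--Sims equivalence result, and a slightly different packaging of the last step). One small correction: the bound $\mathrm{dr}\le 1$ requires quasidiagonality of \emph{all} traces (which does follow from \cite{TWW17} via the UCT and simplicity), not just quasidiagonality of the algebra, and it is not contained in \cite{CETWW}, which bounds nuclear dimension; the paper instead invokes \cite[Theorem~7.2 and Remark~7.3]{CE} together with the nonemptiness of $T^+(C_r^*(G))$ from \cite{MR4165470}.
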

\begin{proof}
Fix a compact open subset $K\subseteq G^{(0)}$. Since $G$ is minimal, it follows that $G$ and the restriction $G|_K$ are Morita equivalent. Hence, $C_r^*(G)$ and $C_r^*(G|_K)$ are stably isomorphic by \cite[Theorem 2.1]{MR3601549}. On the other hand, $G|_K$ is a minimal, almost finite,  $\sigma$-compact étale groupoid with compact totally disconnected unit space $K$. As $K$ is clopen in $G^{(0)}$, $K$ has no isolated points as well. By \cite[Corollary~9.11]{MW20}, we deduce that $C_r^*(G|_K)$ is a unital simple and tracially $\Z$-stable $\C$-algebra. Moreover, $C_r^*(G|_K)$ has real rank zero and stable rank one (see \cite{MR4165470, MR4133645}). Hence, we conclude that $C_r^*(G)$ satisfies the desired properties by Corollary~\ref{stable iso}.

If we also assume amenability and second-countability for $G$, then $C_r^*(G)$ is also a nuclear separable $\Z$-stable $\C$-algebra in the UCT class by Theorem~\ref{main thm} and \cite{MR1703305}.
Notice that by \cite[Lemma 3.9]{MR4165470}, we have $T^+(C_r^*(G))\neq\{0,\infty\}$.
Finally, \cite[Theorem~7.2 and Remark~7.3]{CE} together imply that $C_r^*(G)$ has decomposition rank at most one as desired.
\end{proof}

%{\color{red}A few weeks after we posted our paper on the arXiv, Massoud Amini informed us that they independently obtained very similar results in \cite{AGJP}. We hereby acknowledge that the definition and elementary properties concerning tracial $\Z$-stability appeared in both papers were obtained independently. Most importantly, these two papers significantly complement each other: they provide in \cite{AGJP} many permanence properties and examples of tracially $\Z$-stable $C^*$-algebras which are not $\Z$-stable; and the present paper proves that tracial $\Z$-stability is equivalent to $\Z$-stability for general separable simple nuclear $\C$-algebras.}

\medskip 

%\begin{remark*}
%	The non-unital notion of tracial $\Z$-stability and some of the results in Section 2 and 3 of this note were announced and sketched by the authors of \cite{AGJP} in several conferences some years ago.
%	At the time we posted this note online, their paper [1] was not available online yet. 
%	For the sake of completeness, we included (with different proofs) some of their results that are needed for this note. We do not claim originality of those results.
%	Shortly after this note was posted, 
%	the preprint \cite{FLL} appeared on arXiv where the authors developed similar ideas with the goal of finding conditions that yield stable rank one.
%\end{remark*}

%\bibliography{References}
%\bibliographystyle{plain}

\end{document}